\numberwithin{equation}{section}
\newtheorem{thm}{Theorem}[section]
\newtheorem{lem}[thm]{Lemma}
\newtheorem{prop}[thm]{Proposition}
\theoremstyle{remark}
\theoremstyle{remark}
\theoremstyle{definition}
\newcommand{\la}{\langle}
\newcommand{\ra}{\rangle}
\renewcommand{\O}{{\mathcal{O}}}
\renewcommand{\S}{{\mathbb{S}}}
\newcommand{\R}{{\mathbb{R}}}
\newcommand{\cd}{{\,\cdot\,}}
\newcommand{\ang}{{\not\negmedspace\nabla}}
\newcommand{\angdelta}{{\not\negthickspace\Delta}}
\newcommand{\supp}{{\text{supp }}}
\newcommand{\g}{{\mathfrak{g}}}
\begin{document}
\bibliographystyle{plain}

\title
{Localized energy for wave equations with degenerate trapping}

\author{Robert Booth}
\author{Hans Christianson}
\author{Jason Metcalfe}
\author{Jacob Perry}
\email{rjbooth@live.unc.edu, hans@math.unc.edu, metcalfe@email.unc.edu,\newline perryja@live.unc.edu}
\address{Department of Mathematics, University of North Carolina, Chapel Hill, NC 27599-3250}

\thanks{Booth and Perry were supported, in part, by NSF grants
  DMS-1054289 (PI Metcalfe), DMS-1500817 (PI Taylor), and DMS-1352353
  (PI Marzuola).  Christianson was supported by NSF
grant DMS-1500812 and Metcalfe
by NSF grant DMS-1054289.}



\begin{abstract}
Localized energy estimates have become a fundamental tool when
studying wave equations in the presence of asymptotically flat
background geometry.  Trapped rays necessitate a loss when compared to
the estimate on Minkowski space.  A loss of regularity is a common
way to incorporate such.  When trapping is sufficiently weak, a
logarithmic loss of regularity suffices.  Here, by studying a warped
product manifold introduced by Christianson and Wunsch, we encounter
the first explicit example of a situation where an estimate with an
algebraic loss of regularity exists and this loss is sharp.  Due to
the global-in-time nature of the estimate for the wave equation, the
situation is more complicated than for the Schr\"odinger equation.
An initial estimate with sub-optimal loss is first obtained, where
extra care is required due to the low frequency contributions. 
 An improved estimate is then established using energy
functionals that are inspired by WKB analysis.  Finally, it is shown
that the loss cannot be improved by any power by saturating the
estimate with a quasimode.
\end{abstract}

\maketitle

\section{Introduction}
When studying wave equations on asymptotically flat backgrounds,
(integrated) local energy estimates have become a fundamental tool.
In fact, in a number of scenarios, it has been shown that these local
energy estimates imply other known measures of dispersion, such as
pointwise decay estimates \cite{Tat_Ldecay}, \cite{MTT_Price} and
Strichartz estimates \cite{MT_para}. 
When there are null geodesics that remain in a compact set for all
times, trapping is said to occur, and trapping is a known obstruction
to local energy estimates \cite{ralston}, \cite{sbierski}.  So any estimates in the presence of
trapping must have a loss when compared to those estimates available
on Minkowski space.  This loss is often realized as a loss of regularity.
In many situations where the trapping is sufficiently hyperbolic (unstable), an estimate
with a minimal loss (say, a logarithmic loss of regularity) can be
recovered.  See, e.g., \cite{Burq_Zwor_Geom}, \cite{C_disp},
\cite{CdV_P}, \cite{Ik_two, Ik_several}, \cite{MMTT_Str_BH},
\cite{NonnZw}, \cite{TT}, \cite{WZwo}.  On the other hand, in presence
of elliptic (stable) trapped rays, nearly everything, say everything
but a logarithmic amount of decay, is lost \cite{Burq_Fr}.  Explicit examples
where an algebraic loss of regularity is necessary and sufficient to
prove integrated local energy estimates have not previously appeared.

A similar story exists for the Schr\"odinger equation, where the
analog of the local energy estimate is the local smoothing estimate
\cite{Con_Sau}, \cite{Sjolin}, \cite{Vega}.  There, as the speed of propagation for the
Schr\"odinger equation is proportional to the frequency, the estimate
gives a $1/2$-degree of smoothing.  When trapped rays exist, it is
known that the full $1/2$-degree of smoothing cannot be recovered
\cite{Doi_Riem}.  And as above, until recently, examples where the trapping
caused a minimal loss and examples where the trapping disallowed all
but a minimal amount of the smoothing were known, but nothing
explicitly had been established in between.

In \cite{CW}, the authors considered the Schr\"odinger equation in the
presence of degenerate trapping on a product manifold or a surface of
revolution.  In this example, when $2m$ denotes
the degeneracy of the trapping, they succeeded in
showing that a loss of $\frac{m-1}{2(m+1)}$ derivatives when compared to the
estimate on Euclidean space was necessary and sufficient.  This
provided the first example of local smoothing with a sharp algebraic
loss.  Subsequent studies include \cite{CM}, \cite{C_hfres}, \cite{C_Str} which
address, respectively, the loss caused
by inflection points in the generating function, the case of
infinitely degenerate trapping, and Strichartz estimates
with degenerate trapping.

Here we consider the wave equation on the same
geometric background.  The local smoothing estimate is interesting
even locally in time, and in fact, \cite{CW} proved such for times in
the unit interval.  For the wave equation, however, local in time
estimates follow trivially from uniform energy bounds.  Thus for
the wave equation only global in time estimates are considered, and this
brings some new low frequency analysis into play.

We shall now describe the geometric setting.  We consider the manifold
$\R\times \R \times \S^2$ equipped with the Lorentzian metric
\[ds^2 = -dt^2 + dx^2 + a(x)^2\,d\sigma^2_{\S^2}.\]
The generating function of this surface of revolution is given by $a(x)=(x^{2m}+1)^{1/2m}$.
This constructs a two ended surface, which is asymptotically Minkowski
in both $x$-directions.  Due to the critical point of $a(x)$ at $x=0$,
a surface of trapped null geodesics is formed.  The case of $m=1$ is
the nondegenerate case, which is the well-studied case of hyperbolic
trapping mentioned above, and is a simplified
model for the trapping that occurs on, e.g., Schwarzschild
spaces.  The analysis of \cite{MMTT_Str_BH} can be directly mimicked to
provide the local energy estimates with minimal loss.  It is the
degenerate cases $m\ge 2$ that interest us here.

In this geometric setup and in these coordinates, we note that
\begin{equation}
  \label{box}
  \Box_\g u = - \partial_t^2 u +
  a(x)^{-2}\partial_x\Bigl[a(x)^2\partial_x u\Bigr] + \frac{1}{a(x)^2}\angdelta_{\S^2}u
\end{equation}
where $\angdelta_{\S^2}$ denotes the Laplacian on $\S^2$.
Using the product structure of the metric, we will separate space and
time in the volume form and indicate $dV = a(x)^2\,dx\,d\sigma_{\S^2}$.
We will use $dV\,dt$ when the volume form of the full space-time is desired.

As this metric is static, there is a natural coercive
energy, which is conserved when $\Box_g u = 0$:
\[E[u](t) = \int (\partial_t u)^2 + (\partial_x u)^2 +
\frac{1}{a(x)^2}|\ang_0 u|^2\,dV = \int |\partial u|^2\,dV.\]
Here we are using $\partial u = (\partial_t u, \partial_x u, 
\ang_0
u)$, where $\ang_0$ denotes derivatives tangential to $\S^2$, 
so that $|\partial u|^2 = |\partial u|_\g^2 = (\partial_t u)^2 +
(\partial_x u)^2 + \frac{1}{a(x)^2}|\ang_0 u|^2$.  
The conservation of energy can be proved by multiplying $\Box_\g u$ by
$\partial_t u$ and integrating by parts.  The same method, more
generally, yields:
\begin{equation}
  \label{energy}
  E[u](t) \lesssim E[u](0) + \Bigl|\int_0^t\int \Box_\g u \partial_t u\,dV\,dt\Bigr|.
\end{equation}

We now describe the spaces that shall be used to measure the local
energy:
\[\|u\|_{LE} = \sup_{j\ge 0} 2^{-j/2}\|u\|_{L^2L^2([0,T] \times \{\la
  x\ra\approx 2^j\})},\quad \|u\|_{LE^1} = \|(\partial u, \la
x\ra^{-1} u)\|_{LE}.\]
Forcing terms will be frequently measured in the corresponding dual
norm:
\[\|F\|_{LE^*} = \sum_{j\ge 0} 2^{j/2} \|F\|_{L^2L^2([0,T]\times \{\la
  x\ra\approx 2^j\})}.\]
Here $L^2L^2$ indicates the full space-time $L^2$ norm (where to mimic
what is commonly seen on Minkowski space, the first $L^2$ is in $t$
and the second $L^2$ is over the spatial variables
$(x,\theta,\phi)$).  The norms in $t$ will be taken over $[0,T]$, but
all constants will be independent of $T$, which yields the desired
global estimates.  
We shall use notations such as $LE_R$ to indicate
the $LE$ norm restricted to a single dyadic annulus with $2^j\approx
R$ and $LE_{>R}$ to indicate the $LE$ norm with the restriction that
$2^j > R$.

On $(1+3)$-dimensional Minkowski space, the uniform energy bound and the (integrated)
local energy estimate read as
\[\|\partial u\|_{L^\infty L^2} + \|u\|_{LE^1} \lesssim \|\partial
u(0,\cd)\|_{L^2} + \|\Box u\|_{L^1L^2 + LE^*}.\]
Such estimates originated in the works \cite{Mora1, Mora2, Mora3} and
can be proved by pairing $\Box u$ with $C\partial_t u + \frac{r}{r+2^j} \partial_r u +
\frac{1}{r+2^j} u$, integrating over a space-time slab, integrating by
parts, and using a Hardy inequality.  See, e.g., \cite{St},
\cite{MS_SIAM}.  And in fact, these estimates hold on any 
stationary, Lorentzian, asymptotically flat spaces provided that there are no
trapped rays and there are no eigenvalues nor resonances on the real
line or in the lower half of the complex plane.  See \cite{MST}, which
can also be referred to for a more complete history of such estimates.

As was done in preceding works such as \cite{CW}, \cite{MMTT_Str_BH},
our first goal is to develop an estimate that provides the local
energy estimate away from the trapped set.  The loss here will be
manifest through a coefficient that vanishes where there is trapping.
In \cite{CW}, this follows from a relatively standard integration by
parts argument.  But as we must now consider global-in-time
estimates, some new low frequency
contributions must be considered.  However, using a refinement of the
exterior estimate of \cite{MST}, which is inspired by that of
\cite{MMT_Str_Sch}, we shall obtain:
\begin{thm}\label{thm_lossy}\footnote{For a normed space
    $X=\{x\,:\,\|x\|_X<\infty\}$, we have $aX =
    \{ax\,:\,\|x\|_X<\infty\} = \{y\,:\, \|a^{-1}y\|_X<\infty\}$.  And
  thus, we denote $\|y\|_{aX} = \|a^{-1}y\|_X$.}
On the geometry described above, if, for each $t$, $u(t,x)$ vanishes for
sufficiently large $|x|$, we have
\begin{multline}
  \label{main_lossy}
\sup_t E[u](t)^{1/2} + \|\partial_x u\|_{LE} + \Bigl\|\frac{|x|^m}{\la x\ra^m} \partial_t
 u\Bigr\|_{LE} + \Bigl\|\frac{|x|^m}{\la x\ra^m}
 \frac{1}{a(x)}|\ang_0 u|\Bigr\|_{LE} + \|\la x\ra^{-1} u\|_{LE}
 \\\lesssim E[u](0)^{1/2} + \|\Box_\g u\|_{L^1L^2 + (|x|/\la x\ra)^m LE^*}
\end{multline}
and
\begin{multline}
  \label{main_lossy2}
\sup_t E[u](t)^{1/2} + \|\partial_x u\|_{LE} + \Bigl\|\frac{|x|^m}{\la x\ra^m} \partial_t
 u\Bigr\|_{LE} + \Bigl\|\frac{|x|^m}{\la x\ra^m}
 \frac{1}{a(x)}|\ang_0 u|\Bigr\|_{LE} + \|\la x\ra^{-1} u\|_{LE}
 \\\lesssim E[u](0)^{1/2} + \|\la D_\omega\ra^{\frac{m-1}{2(m+1)}}
   \Box_g u\|^{1/2}_{LE^*} \|\la
   D_\omega\ra^{-\frac{m-1}{2(m+1)}} \partial u\|^{1/2}_{LE} + \|\Box_\g u\|_{LE^*}.
\end{multline}
\end{thm}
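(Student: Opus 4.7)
The plan is to prove \eqref{main_lossy} by a Morawetz-type multiplier argument adapted to the degenerate trapping, and then to obtain \eqref{main_lossy2} by trading the spatial trapping weight for a fractional angular regularity via a spherical harmonic decomposition.

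For \eqref{main_lossy}, we pair $\Box_\g u$ against a spatial multiplier
\[M u = -\chi(x)\,\partial_x u + \alpha(x)\,u\]
and integrate over the slab $[0,T]\times\R\times\S^2$ with volume form $dV\,dt = a(x)^2\,dx\,d\sigma_{\S^2}\,dt$. After integration by parts (in $t$, in $x$, and on $\S^2$ for the angular Laplacian), the pairing becomes a spacetime integral of a quadratic form $Q(u,\partial u)$ plus time-boundary terms, which we absorb via \eqref{energy}. A direct computation gives the coefficients
\[\tfrac{1}{2}\chi' + \chi\,\tfrac{a'}{a} + \alpha \quad\text{on }(\partial_t u)^2, \qquad \tfrac{1}{2}\chi' - \chi\,\tfrac{a'}{a} - \alpha \quad\text{on }(\partial_x u)^2,\]
\[-\tfrac{1}{2}\chi' - \alpha \quad\text{on }\tfrac{|\ang_0 u|^2}{a^2}, \qquad \tfrac{1}{2}\tfrac{(\alpha' a^2)'}{a^2} \quad\text{on }u^2.\]
The choice of $\chi$ and $\alpha$ exploits the factorization $a'/a = x^{2m-1}/(1+x^{2m})$, which vanishes to order $2m-1$ at the trapped set $x=0$. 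Taking $\chi$ odd with $\chi'(0)>0$, built as a dyadic sum $\chi = \sum_{j\ge 0} 2^{-j}\chi_j$ with $\chi_j = x/(x+2^j)$ so as to produce the LE structure at all dyadic scales, we have $\chi\,a'/a \sim |x|^{2m}$ near $x=0$ and $\sim 1/|x|$ at infinity. Setting $\alpha = -\tfrac{1}{2}\chi' - c\,\chi\,a'/a$ for an appropriate $c \in (1/2,1)$ collapses the first three coefficients of $Q$ to $(1-c)\chi\,a'/a$, $\chi' - (1-c)\chi\,a'/a$, and $c\,\chi\,a'/a$, all positive. Their sizes match the LHS weights exactly: $(\partial_x u)^2$ receives $\gtrsim 1/\la x\ra$ (no loss), while $(\partial_t u)^2$ and $|\ang_0 u|^2/a^2$ receive $\gtrsim |x|^{2m}/\la x\ra^{2m+1}$, reproducing the sharp trapping weight $(|x|/\la x\ra)^m$. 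A Hardy inequality (or a further tweak of $\alpha$) converts the $u^2$ coefficient into $\|\la x\ra^{-1}u\|_{LE}$. Dualizing $\int Mu\,\Box_\g u\,dV\,dt$ via $L^\infty L^2$--$L^1 L^2$ and $(|x|/\la x\ra)^{-m} LE$--$(|x|/\la x\ra)^m LE^*$ produces \eqref{main_lossy}, provided the low-frequency behavior at spatial infinity is controlled. This last point---absent from the local-in-time Schr\"odinger analysis in \cite{CW}---requires the refinement of the exterior LE estimate from \cite{MST}, inspired by \cite{MMT_Str_Sch}, and it is precisely here that the $\|\la x\ra^{-1}u\|_{LE}$ term on the LHS becomes essential.

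For \eqref{main_lossy2}, decompose $u = \sum_l u_l Y_l$ in spherical harmonics. Each mode solves a 1D wave equation with effective potential $V_l(x) = \lambda_l/a(x)^2 + a''/a$ peaking at $x=0$ with a degenerate maximum of order $2m$; the WKB turning point of a quasimode at angular frequency $\sqrt{\lambda_l}\sim l$ sits at scale $x\sim \la l\ra^{-1/(m+1)}$, where $(|x|/\la x\ra)^m \sim \la l\ra^{-m/(m+1)}$. An $l$-adapted variant of the multiplier above, combined with Cauchy--Schwarz across the $l$-sum, trades the spatial weight $(|x|/\la x\ra)^m$ for a fractional angular factor $\la l\ra^{-(m-1)/(2(m+1))}$ split symmetrically between $\Box_\g u$ and $\partial u$, yielding the geometric-mean form in \eqref{main_lossy2}. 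The exponent $(m-1)/(2(m+1))$ is precisely the Schr\"odinger trapping loss from \cite{CW}.

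\textbf{Main obstacle.} The principal difficulty is the delicate balancing of the four quadratic-form coefficients of $Q$: the no-loss demand on $(\partial_x u)^2$ competes with the sharp $|x|^{2m}$ loss on $(\partial_t u)^2$ and $|\ang_0 u|^2/a^2$, and simultaneous positivity is achieved only through the precise cancellation $\alpha = -\chi'/2 - c\,\chi a'/a$ outlined above. A secondary and genuinely new challenge is the global-in-time low-frequency analysis, which demands a careful refinement of the exterior estimate of \cite{MST} and the use of $\|\la x\ra^{-1}u\|_{LE}$ as an auxiliary control quantity.
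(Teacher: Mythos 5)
Your multiplier algebra is correct, but the specific ansatz for $(\chi,\alpha)$ cannot work globally, and this is not a removable defect. With $\alpha = -\tfrac{1}{2}\chi' - c\,\chi\,a'/a$, the $(\partial_x u)^2$ coefficient collapses to $\chi' - (1-c)\chi\,a'/a$. For any bounded, monotone $\chi$ adapted to the LE structure---including your dyadic sum, for which $\chi'\lesssim \log\la x\ra/\la x\ra^2$---the term $\chi\,a'/a\approx 1/|x|$ dominates $\chi'$ for $|x|$ large, so the coefficient is \emph{negative} at infinity whenever $c<1$. The paper does not attempt a single choice of Lagrangian correction valid both near the trapping and at infinity. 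Instead it proves two estimates with \emph{different} choices of $g$: an interior estimate (Lemma~\ref{hf_lemma}) with $g=\tfrac12 f' + \tfrac{\delta}{R^{4m}}\tfrac{a'}{a}f$, the small-$c$ version of your ansatz, truncated to $|x|<R$ precisely to sidestep the sign problem at infinity; and the exterior estimate (Proposition~\ref{prop.exterior}) with $g=\tfrac12 a^{-2}h\,\partial_x\bigl((1-\beta)a^2\bigr)$, designed so that the $\tfrac{a'}{a}f$ contribution cancels completely on $(\partial_x u)^2$, truncated away from $x=0$ to sidestep a sign problem there on the angular term. These are then glued by a bootstrap of the error at the transition shell $|x|\approx R$.

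A second, structurally essential gap is the absence of the time-frequency split $u = u_{<\tau}+u_{>\tau}$, which is what makes that glueing close globally in time. For the high-frequency piece, the bootstrap error $R^{-2}\|u_{>\tau}\|^2_{LE_{|x|\approx R}}\lesssim \tau^{-2}R^{-3}\|\partial_t u_{>\tau}\|^2_{L^2L^2_{|x|\approx R}}$ is absorbed by taking $R$ large; for the low-frequency piece, a Lagrangian-only multiplier ($f\equiv 0$, $g=1/a$, Lemma~\ref{lf_interior}) is used, which produces an unavoidable \emph{negative} $(\partial_t u_{<\tau})^2$ term that is absorbed only because the time frequency of $u_{<\tau}$ is at most $\tau$. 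Trapping is irrelevant for $u_{<\tau}$, and this is precisely the new low-frequency phenomenon your last paragraph gestures at but does not resolve. Finally, your route to \eqref{main_lossy2} via mode-by-mode WKB and turning points is the wrong tool at this level: \eqref{main_lossy2} follows from the \emph{same} multiplier argument by applying Cauchy--Schwarz across spherical-harmonic modes to the pairing $\iint\Box_\g u\cdot(f\partial_x u+gu)\,dV\,dt$, factoring $\la D_\omega\ra^{\pm\frac{m-1}{2(m+1)}}$ in and out for free by orthogonality; the WKB energy functionals are needed only for the genuinely sharper Theorem~\ref{thm_sharp}.
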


We note that when $m=1$ this is the direct analog of \cite[Theorem
1.2]{MMTT_Str_BH}.  See also \cite{Booth_Masters} where the case of
$m=1$ was considered.  Theorem \ref{thm_lossy} is already essential as it provides
lossless local energy estimates away from the trapping at $x=0$ and 
everywhere for $\partial_x u$ and for $\la x\ra^{-1} u$.  Moreover, in
the sequel, this estimate will allow us to localize our analysis to a
small neighborhood of the trapping.

We then seek to improve the corresponding loss.  One option would be
to use the resolvent bounds that were developed in \cite{CW}.  As an
alternative, we shall adapt the techniques of \cite{MMTT_Str_BH},
which are based on energy functionals that are inspired by WKB
theory.  We hope that this alternate method may provide more
flexibility when considering more general cases of degenerate
trapping.  As a brief motivation, see \cite{Toh}, which adapted the
methods of \cite{MMTT_Str_BH} to the setting of Kerr backgrounds with
sufficiently small angular momenta.  Using these methods, we prove
\begin{thm}\label{thm_sharp}
  Let $m\ge 2$.  On the geometry described above, suppose $\Box_\g u =
  0$.  Then we have
  \begin{equation}
    \label{sharp_est}
    \|u\|^2_{LE^1} \lesssim E[\la
    D_\omega\ra^{\frac{m-1}{2(m+1)}} u](0).
  \end{equation}
\end{thm}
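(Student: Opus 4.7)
The plan is to upgrade the lossy estimate \eqref{main_lossy}/\eqref{main_lossy2} to the sharp estimate \eqref{sharp_est} by performing a careful microlocal analysis near the trapped set $\{x=0\}$, following the WKB-based strategy in the spirit of \cite{MMTT_Str_BH}. First I would use Theorem \ref{thm_lossy} as a black box: the terms $\|\partial_x u\|_{LE}$ and $\|\langle x\rangle^{-1} u\|_{LE}$ appearing in $\|u\|_{LE^1}$ are already controlled, without any angular loss, by $E[u](0)^{1/2}$. Moreover, outside any fixed neighborhood of $x=0$ the coefficient $|x|^m/\langle x\rangle^m$ is bounded below, so $\|\chi_{|x|\gtrsim 1}\partial_t u\|_{LE}$ and $\|\chi_{|x|\gtrsim 1}a^{-1}|\ang_0 u|\|_{LE}$ are also controlled without loss. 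Thus the proof reduces to showing
\[
\bigl\|\chi_{|x|\lesssim 1}\partial_t u\bigr\|_{LE}^2 + \bigl\|\chi_{|x|\lesssim 1}\tfrac{1}{a(x)}|\ang_0 u|\bigr\|_{LE}^2 \lesssim E\bigl[\langle D_\omega\rangle^{\frac{m-1}{2(m+1)}} u\bigr](0),
\]
where $\chi$ is a smooth spatial cutoff supported near the trapped set.

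Next, since the equation $\Box_\g u=0$ is stationary and rotationally symmetric, I would take the Fourier transform in $t$ and decompose in spherical harmonics on $\S^2$. Writing $u=\sum_{\ell,k}\hat u_{\ell,k}(\tau,x)\,Y_{\ell,k}(\omega)e^{it\tau}$, the equation becomes the family of one-dimensional Schr\"odinger-type ODEs
\[
a(x)^{-2}\partial_x\bigl(a(x)^2\partial_x \hat u_{\ell,k}\bigr) + \Bigl(\tau^2 - \tfrac{\ell(\ell+1)}{a(x)^2}\Bigr)\hat u_{\ell,k} = 0.
\]
The effective potential $V_\ell(x)=\ell(\ell+1)/a(x)^2$ has a nondegenerate maximum at $x=0$ of order $2m$, namely $V_\ell(0)-V_\ell(x)\sim \ell^2 x^{2m}/m$ for small $x$. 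Frequencies with $\ell\lesssim 1$ or with $\tau^2$ well separated from $\ell(\ell+1)$ are classically non-trapped and give the estimate with no loss by a standard positive-commutator/multiplier argument. The loss only appears in the resonant regime $|\tau|\sim \ell\gg 1$ with $|\tau^2-\ell(\ell+1)|\ll \ell^2$, and I would dyadically decompose this regime in the parameter $\mu:=|\tau^2-\ell(\ell+1)|/\ell^2\in (0,1]$.

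For each dyadic piece I would construct a WKB-inspired multiplier of the form $F_{\tau,\ell}(x)\,\partial_x\hat u_{\ell,k}+G_{\tau,\ell}(x)\,\hat u_{\ell,k}$, with $F$ and $G$ adapted to the turning points $|x|\sim \mu^{1/(2m)}$ of the effective potential. Pairing this multiplier with the ODE and integrating by parts in $x$ should produce, up to manageable error terms, an inequality of the form
\[
\mu^{-\frac{m-1}{m+1}}\Bigl(\|\chi\,\tau\,\hat u_{\ell,k}\|_{L^2_x}^2+\|\chi\,a^{-1}\ell\,\hat u_{\ell,k}\|_{L^2_x}^2\Bigr)\lesssim \bigl\langle \partial\hat u_{\ell,k},\partial\hat u_{\ell,k}\bigr\rangle_{L^2_x},
\]
where the weight $\mu^{-\frac{m-1}{m+1}}$ reflects the balance between the width $\mu^{1/(2m)}$ of the classically allowed region at the barrier top and the frequency $\ell$. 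The sharp exponent $\tfrac{m-1}{2(m+1)}$ arises from optimizing in $\mu$: summing the dyadic pieces $\mu=2^{-j}$ and saturating at $\mu\sim \ell^{-\frac{2m}{m+1}}$ (the scale at which the classical and quantum widths agree) converts the potential loss $\mu^{-\frac{m-1}{m+1}}$ into the loss $\ell^{\frac{m-1}{m+1}}$ on the right-hand side, which is exactly the square of the weight $\langle D_\omega\rangle^{\frac{m-1}{2(m+1)}}$.

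Finally, I would assemble the pieces: undo the Fourier transform in $t$ and Plancherel in $\omega$ to sum the angular and time-frequency estimates, combine with the already lossless pieces from Theorem \ref{thm_lossy}, and replace the resulting space-time $L^2$ bound on the data by $E[\langle D_\omega\rangle^{\frac{m-1}{2(m+1)}} u](0)$ using conservation of energy for $\Box_\g u=0$ applied to each dyadic angular piece. The main obstacle will be the precise design of the multipliers $F_{\tau,\ell}, G_{\tau,\ell}$: they must be chosen so that the quadratic form produced after integration by parts is coercive uniformly in $(\tau,\ell)$ within each dyadic shell in $\mu$, while simultaneously yielding the correct power of $\mu$, and the remainder errors from cutoffs and from commuting weights past the ODE have to be absorbable either into the lossless exterior estimate from Theorem \ref{thm_lossy} or into a neighboring dyadic shell.
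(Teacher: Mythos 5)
Your high-level strategy — reduce to $|x|\lesssim 1$ via Theorem~\ref{thm_lossy}, separate variables in $t$ and $\omega$, and use a WKB ansatz adapted to the degenerate barrier top to capture the resonant regime $|\tau|\sim\lambda$ — is the same as the paper's. But the implementation diverges in a way that matters, and several details as stated would not close.

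The paper does not perform a dyadic decomposition in the spectral parameter $\mu = |\tau^2-\lambda^2|/\lambda^2$ and does not argue by multipliers paired against the ODE. Instead it builds pointwise (in $x$) energy functionals of the form
\begin{equation*}
E[\phi](x) \;=\; \lambda^2\,(b(x)+\varepsilon)^{1/2}\phi^2 + (b(x)+\varepsilon)^{-1/2}(\partial_x\phi)^2 + \tfrac12 b'(x)(b(x)+\varepsilon)^{-3/2}\phi\,\partial_x\phi,
\end{equation*}
verifies positivity and a differential inequality for $E^{1/2}$, and applies Gronwall. The whole resonant regime within each sign-case of $\varepsilon$ is then handled by a \emph{single} functional; the sharp loss $\lambda^{(m-1)/(2(m+1))}$ falls out directly from an $L^2$ bound on the WKB weight $(\lambda^{-2m/(m+1)}+|b(x)+\varepsilon|)^{-1/4}$ (Lemma~3.2 and its analogue in Case~B). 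This is where your route is at risk: a dyadic decomposition in $\mu$ with per-shell bounds generically costs a $\log\lambda$ factor on resumming unless the shell contributions decay geometrically, and nothing in the sketch establishes that. The paper's Gronwall/energy-functional construction avoids the issue entirely.

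There are also concrete arithmetic and reduction gaps. (i) Your claimed per-shell loss $\mu^{-(m-1)/(m+1)}$ is not the WKB-weight contribution; for a barrier of order $2m$ the relevant quantity is $\int (b+\varepsilon)^{-1/2}\,dx\sim \mu^{-(m-1)/(2m)}$ over the allowed region, and evaluating this at the correct critical scale $\mu\sim\lambda^{-2m/(m+1)}$ gives $\lambda^{(m-1)/(m+1)}$, consistent with the theorem; with your exponent the scales don't match. (ii) The displayed per-shell inequality cannot be correct as written: both sides are, up to constants, $\|\partial\hat u\|_{L^2_x}^2$, so a factor $\mu^{-(m-1)/(m+1)}\ge 1$ on the left is self-contradictory. (iii) Taking the $t$-Fourier transform of a solution of the initial value problem requires care; the paper formulates Proposition~\ref{sharp_trapping_prop} for $v,g$ compactly supported in $x$ and vanishing for $t\le 0$, and then carries out an explicit reduction from $\Box_\g u=0$ to that setting: replacing $\Box_\g$ by the model $P=-\partial_t^2+\partial_x^2+a^{-2}\ang_0\cdot\ang_0$ and absorbing the first-order error $2\frac{a'}{a}\partial_x v$ by shrinking the spatial support, adding the $L^1L^2$ forcing space by a duality argument against \eqref{main_lossy2}, and then cutting off $u$ in $x$ and $t$ with errors controlled by Theorem~\ref{thm_lossy} and the uniform energy bound. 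These steps are nontrivial and are simply omitted in the proposal. If you want to push a multiplier-plus-dyadic implementation through, you would need to (a) fix the exponent bookkeeping, (b) prove geometric decay across $\mu$-shells, and (c) carry out the localization and $t$-support reductions that make the stationary ODE analysis legitimate.
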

Here $D_\omega = -i \sqrt{\frac{1}{a^2} \angdelta_{\S^2}}$.  For
clarity of exposition, we have only stated this for the homogeneous
case.  A forcing term, which is measured in $\la
D_\omega\ra^{-\frac{m-1}{m+1}}LE^*$ may be included.  This is done,
e.g., in \cite{MMTT_Str_BH} for the nondegenerate case, and the
argument here would be similar.

Our last task is to show that this estimate is sharp in the sense that
the estimate fails to hold if the loss is decreased by any power.
Here we will rely on the quasimode that was constructed in \cite{CW}.
Saturating the estimate for the wave equation is a bit different than
saturating the local-in-time estimate for the Schr\"odinger equation.
Rather than saturating the amount of smoothing available, here we must
saturate the integrability.  The spectral parameter that arises here
is squared due to the wave equation being second order in time.  By
choosing the correct root, the solution that is constructed from the
quasimode has appropriate growth to do exactly this.  We, in fact,
prove:
\begin{thm}
  \label{thm_quasi} 
There exist compactly supported functions $\psi_0$ and $\psi_1$ and a $T>0$ so
that the solution $\psi$ to
\[\Box_\g \psi = 0,\quad (\psi,\partial_t\psi)|_{t=0}=(\psi_0,\psi_1)\]
satisfies
\[\int_0^T \|\beta(|x|) a(x)^{-1}|\ang_0 \psi|\|^2_{L^2}\,dt \gtrsim \|\la
D_\omega\ra^{\frac{m-1}{2(m+1)}} a(x)^{-1}|\ang_0 \psi_0|\|_{L^2}^2 + \|\la
D_\omega\ra^{\frac{m-1}{2(m+1)}} \psi_1\|_{L^2}^2.\]
\end{thm}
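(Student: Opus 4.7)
The plan is to construct the saturating initial data from an angular-frequency-$k$ quasimode for the spatial operator and to propagate it by the wave equation using the positive square root of the quasi-eigenvalue. Write $Pu := -a(x)^{-2}\partial_x(a(x)^2\partial_x u) - a(x)^{-2}\angdelta_{\S^2}u$, so that \eqref{box} reads $\Box_\g u = -(\partial_t^2 u + Pu)$. The construction in \cite{CW} provides, for each large $k$, a compactly supported function $u_k = f_k(x)Y_k(\omega)$ where $Y_k$ satisfies $-\angdelta_{\S^2}Y_k\sim k^2 Y_k$ and $f_k$ concentrates at semiclassical scale $|x|\lesssim k^{-1/(m+1)}$ around the trapped set $x=0$, with $\|u_k\|_{L^2}=1$ and a quasi-eigenvalue $\mu_k\sim k^2$ satisfying $\|(P-\mu_k)u_k\|_{L^2} = O(k^{-N})$ for any preassigned $N$. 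A smooth cutoff enforcing compact support introduces only $O(k^{-\infty})$ errors.

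Set $\omega_k := +\sqrt{\mu_k}\sim k$ and take complex initial data $(\psi_0,\psi_1) = (u_k,-i\omega_k u_k)$; the statement with real data follows by passing to real parts at the end. The approximate solution $v(t) := e^{-i\omega_k t}u_k$ satisfies $\Box_\g v = -e^{-i\omega_k t}(P-\mu_k)u_k$, so the remainder $w := \psi - v$ solves $\Box_\g w = e^{-i\omega_k t}(P-\mu_k)u_k$ with trivial Cauchy data. The standard energy inequality \eqref{energy} applied to $w$ yields $\sup_{t\in[0,T]} E[w](t)^{1/2} \lesssim T\,\|(P-\mu_k)u_k\|_{L^2} = O(Tk^{-N})$. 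Choosing $T=Ck^{2s}$ with $s:=\frac{m-1}{2(m+1)}$ and $N$ sufficiently large, every contribution of $w$ to the quantities in the theorem is negligible, so $\psi$ is effectively replaced by $v$ throughout.

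Since $|e^{-i\omega_k t}|=1$, the left-hand side becomes
\[\int_0^T \|\beta a^{-1}|\ang_0 v|\|_{L^2}^2\,dt = T\,\|\beta a^{-1}|\ang_0 u_k|\|_{L^2}^2 \sim Tk^2 = Ck^{2+2s},\]
because $\beta\equiv 1$ and $a\approx 1$ on $\supp u_k$, and $\int_{\S^2}|\ang_0 Y_k|^2\,d\sigma\sim k^2$. On the right-hand side, the concentration of $u_k$ in angular frequency at $\sim k$ yields $\|\la D_\omega\ra^s a^{-1}|\ang_0 u_k|\|_{L^2}^2 + \|\la D_\omega\ra^s \omega_k u_k\|_{L^2}^2 \sim k^{2+2s}$. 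Taking $C$ sufficiently large produces $\text{LHS}\gtrsim\text{RHS}$ for the fixed triple $(\psi_0,\psi_1,T)$ so constructed, which is the claim of the theorem.

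The principal technical obstacle is justifying the relation $\|\la D_\omega\ra^s h\|_{L^2}\sim k^s\|h\|_{L^2}$ for functions $h$ of the form $g(x)Y_k(\omega)$ (or $g(x)|\ang_0 Y_k|$) that are concentrated in the trapping region. Because $D_\omega = a(x)^{-1}\sqrt{-\angdelta_{\S^2}}$ has an $x$-dependent symbol, these are not genuine eigenfunctions of $D_\omega$; however, $1+D_\omega^2$ commutes with multiplication by functions of $x$, and on a pure spherical-harmonic angular mode it acts by multiplication by the $x$-dependent scalar $1 + a(x)^{-2}\lambda_k$ with $\lambda_k\sim k^2$. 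Functional calculus therefore gives $\la D_\omega\ra^s(gY_k) = (1+a(x)^{-2}\lambda_k)^{s/2}\,gY_k$, and the semiclassical concentration forces $a(x) = 1 + O(k^{-2m/(m+1)})$ on $\supp g$, making this multiplier equal to $k^s$ up to a relative error $O(k^{-2m/(m+1)})$. The vector-valued term $\ang_0 u_k$ requires a spherical-harmonic expansion to isolate the components at angular frequency $\sim k$, but the analogous lower bound holds and is absorbed in the size comparisons above.
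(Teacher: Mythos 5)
Your overall architecture (quasimode at angular frequency $k$, propagate by $e^{\pm i\omega_k t}$, estimate the remainder by the energy inequality, compare on a time interval $T\sim k^{\frac{m-1}{m+1}}$) matches the paper's, and your discussion of how $\la D_\omega\ra^s$ acts on a separated mode concentrated near $x=0$ is fine. But there is a genuine gap at the most delicate point: the quality of the quasimode and, consequently, the choice of $\tau$.

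You assert a quasimode with $\|(P-\mu_k)u_k\|_{L^2}=O(k^{-N})$ for arbitrary $N$. For degenerate trapping of order $2m$, the best one can do (and what \cite{CW} actually constructs) is an \emph{algebraic} error: the paper's lemma gives $\|R\|_{L^2}\le C\lambda^{-\frac{2m}{m+1}}\|\tilde{u}\|_{L^2}$, with a quasi-eigenvalue whose imaginary part also has size $\lambda^{-\frac{2m}{m+1}}$. Indeed, if $O(k^{-\infty})$-accurate quasimodes existed, Theorem~\ref{thm_sharp} would be false — the algebraic loss $\frac{m-1}{2(m+1)}$ is precisely dual to this algebraic quasimode error, so your assumption is internally inconsistent with the paper's positive result. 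With the true error size, your argument breaks: plugging $\|(P-\mu_k)u_k\|\sim k^{\frac{2}{m+1}}\|u_k\|$ and $T\sim k^{\frac{m-1}{m+1}}$ into your Duhamel bound for $w$ gives $\sup_t E[w]^{1/2}\lesssim T\cdot k^{\frac{2}{m+1}}\|u_k\|\sim k\|u_k\|$, which is the \emph{same} order as the main term $\|\partial_t v\|\sim k\|u_k\|$, not smaller. A purely oscillatory $v=e^{-i\omega_k t}u_k$ with real $\omega_k$ therefore does not obviously dominate the remainder.

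The paper resolves exactly this by choosing $\tau$ complex, $\tau^2=\lambda^2(1+E)$ with $\Im\tau<0$, so that $v$ and the source $\tilde{R}$ share a common growth factor $e^{|\Im\tau|t}$, with $|\Im\tau|\sim\lambda^{\frac{1-m}{m+1}}$. Picking $T=\varepsilon\lambda^{\frac{m-1}{m+1}}$ makes $|\Im\tau|T\sim\varepsilon$ a fixed small constant, and the comparison between $\|\partial_\theta v\|_{L^2L^2}$ and $\|a^{-1}\partial_\theta w\|_{L^2L^2}$ becomes a clean constant-level inequality $\frac{1}{2}B(2T)-C^2\varepsilon^2 B(T)^2\ge\frac12$ for $\varepsilon=\frac{1}{2C}$. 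This constant chase is the heart of the saturation argument, and it is missing from your proposal; without it (or without a better quasimode, which does not exist here), the remainder is not small and the lower bound does not follow.
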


The article is organized as follows.  In the next section, we shall
prove Theorem \ref{thm_lossy}.  To do so, the analysis is broken into
a low frequency regime and a range of frequencies that is bounded away
from zero.  The next section is devoted to refining the analysis to
prove the sharp estimate as stated in Theorem \ref{thm_sharp}.  The
arguments here are adaptations of those developed in
\cite{MMTT_Str_BH}.  Finally, in the last section, we prove Theorem
\ref{thm_quasi}, which shows that the loss of regularity cannot be
improved by any power.

\section{A lossy estimate - Proof of Theorem \ref{thm_lossy}}

In this section, we shall prove Theorem \ref{thm_lossy}.  Some of this
section is inspired by \cite{MST}, but as we are examining an explicit
metric that is stationary and has a product structure, the methods can
be significantly simplified.  

Throughout this article, we fix $\beta(\rho)$ to be a smooth, monotone cutoff that is 1 for $\rho <1/2$
and $0$ for $\rho >1$.  To prove Theorem \ref{thm_lossy}, we
shall analyze $u_{<\tau} = \beta(D_t/\tau) u$ and $u_{>\tau} =
(1-\beta(D_t/\tau)) u$ separately.  The parameter $0<\tau\ll 1$ will be
chosen later.  In particular, we shall establish the following
estimate that handles any frequency range that is bounded away from $0$.
\begin{prop}\label{HFprop}
  For any $\tau>0$, we have
  \begin{multline}
    \label{HFmain}
\|\partial_x u_{>\tau}\|^2_{LE} + \Bigl\|\frac{|x|^m}{\la x\ra^m} \partial_t
 u_{>\tau}\Bigr\|^2_{LE} + \Bigl\|\frac{|x|^m}{\la x\ra^m}
 \frac{1}{a(x)}|\ang_0 u_{>\tau} |\Bigr\|^2_{LE} + \|\la x\ra^{-1}
 u_{>\tau}\|^2_{LE}
\\ \lesssim E[u](0) + \|\Box_\g u\|_{L^1L^2 + (|x|/\la x\ra)^m
   LE^*}
\|(\partial u, a(x)^{-1} u)\|_{L^\infty L^2\cap (\la x\ra / |x|)^m LE}
 \end{multline}
and
  \begin{multline}
    \label{HFmain2}
\|\partial_x u_{>\tau}\|^2_{LE} + \Bigl\|\frac{|x|^m}{\la x\ra^m} \partial_t
 u_{>\tau}\Bigr\|^2_{LE} + \Bigl\|\frac{|x|^m}{\la x\ra^m}
 \frac{1}{a(x)}|\ang_0 u_{>\tau} |\Bigr\|^2_{LE} + \|\la x\ra^{-1}
 u_{>\tau}\|^2_{LE}
\\ \lesssim E[u](0) + \|\la D_\omega\ra^{\frac{m-1}{2(m+1)}} \Box_\g u\|_{LE^*}
\|\la D_\omega\ra^{-\frac{m-1}{2(m+1)}}\partial u\|_{LE} + \|\Box_\g u\|_{LE^*}^2.
 \end{multline}
\end{prop}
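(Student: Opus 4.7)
The overall strategy is the multiplier method. Since the metric is stationary, $\Box_\g$ commutes with $\beta(D_t/\tau)$, and hence $\Box_\g u_{>\tau} = (\Box_\g u)_{>\tau}$. Working with $u_{>\tau}$ in place of $u$ gains the crucial relation $\|u_{>\tau}\|_{L^2}\lesssim \tau^{-1}\|\partial_t u_{>\tau}\|_{L^2}$, which eliminates the low-frequency obstructions that motivate the separate treatment of $u_{<\tau}$. I would split the spatial analysis into an exterior region $|x|\gtrsim 1$, where the weight $|x|^m/\la x\ra^m$ is uniformly bounded below and the arguments of \cite{MST} apply essentially unchanged, and an interior region $|x|\lesssim 1$ where the trapping at $x=0$ concentrates all of the difficulty.

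On both regions the multiplier takes the form $Xu = C\,\partial_t u + \phi(x)\,\partial_x u + \psi(x)\,u$. The $C\partial_t$ portion contributes the conserved energy $\sup_t E[u_{>\tau}](t)$ through \eqref{energy}, together with the $L^1L^2$ pairing of $\Box_\g u$ against $\partial u\in L^\infty L^2$ on the right-hand side. Expanding $\Box_\g u\cdot(\phi\partial_x u + \psi u)$ in the volume form $dV = a(x)^2\,dx\,d\sigma_{\S^2}$ and integrating by parts produces a bulk identity whose coefficients of $(\partial_x u)^2$, $(\partial_t u)^2$, $a(x)^{-2}|\ang_0 u|^2$, and $u^2$ are polynomial in $\phi$, $\psi$, and in $a'(x)/a(x) = x^{2m-1}/(x^{2m}+1)$. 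On the exterior, the standard dyadic choices $\phi = \sum_{j\ge 0}2^{-j/2}f_j(x)$ with $f_j$ an odd cutoff on $|x|\sim 2^j$, and $\psi$ an accompanying Hardy weight, produce positive contributions matching each LHS term of \eqref{HFmain} with uniform constants.

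In the interior region the key structural fact is that $a'/a$ vanishes to order $2m-1$ at $x=0$. A lossless identity for the three trapped quantities is impossible there; the best tuning of $\phi,\psi$ produces the $(\partial_t u)^2$ and $a^{-2}|\ang_0 u|^2$ coefficients proportional to $(|x|/\la x\ra)^{2m}$, which is exactly the squared weight on the LHS of \eqref{HFmain}. The $(\partial_x u)^2$ coefficient can be kept uniformly positive regardless, giving the lossless $\|\partial_x u_{>\tau}\|_{LE}$. The Hardy contribution $\|\la x\ra^{-1}u_{>\tau}\|_{LE}$ arises from the $\psi u$ multiplier on the exterior and, on the interior (where $\la x\ra^{-1}\sim 1$), from the high-frequency bound $\|u_{>\tau}\|_{L^2}\lesssim \tau^{-1}\|\partial_t u_{>\tau}\|_{L^2}$.

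For the right-hand side, the interior spatial multiplier carries the weight $(|x|/\la x\ra)^m$ inherited from its bulk coefficients, so Cauchy--Schwarz against $\Box_\g u$ produces the $(|x|/\la x\ra)^m LE^*$ term paired against $(\partial u,a^{-1}u)\in(\la x\ra/|x|)^m LE$, yielding \eqref{HFmain}. For \eqref{HFmain2}, I would dyadically decompose in angular frequency $\la D_\omega\ra\sim 2^k$, apply \eqref{HFmain} to each piece, and use a Young-type inequality with exponent $\alpha=\frac{m-1}{m+1}$ to balance the angular factor against the trapping weight; summing the geometric series in $k$ recovers the interpolated Sobolev norm. The main obstacle will be constructing the interior $\phi$ and $\psi$ so that simultaneously (i) the $(\partial_x u)^2$ coefficient is uniformly positive, (ii) the $(\partial_t u)^2$ and $a^{-2}|\ang_0 u|^2$ coefficients vanish at $x=0$ to exactly order $2m$, and (iii) the $u^2$ remainder generated by $\partial_x^2\psi$ is absorbed by the Hardy/high-frequency bound. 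The delicate power $2m$ in the weight is forced by the degeneracy of $a$ at $x=0$, and matches the scaling of the quasimode of \cite{CW} used later to prove sharpness.
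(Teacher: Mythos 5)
Your outline captures the broad strategy correctly—stationary metric so $\Box_\g$ commutes with $\beta(D_t/\tau)$, a multiplier identity in the warped volume form $dV=a(x)^2\,dx\,d\sigma$, and the degeneracy of $a'/a$ at $x=0$ forcing the $(|x|/\la x\ra)^{2m}$ coefficient on $(\partial_t u)^2$ and $a^{-2}|\ang_0 u|^2$. But two essential mechanisms are missing or misidentified.

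First, you propose splitting at the fixed scale $|x|\lesssim 1$ and invoke the bound $\|u_{>\tau}\|_{L^2}\lesssim\tau^{-1}\|\partial_t u_{>\tau}\|_{L^2}$ in the interior to handle $\|\la x\ra^{-1}u_{>\tau}\|_{LE}$. This cannot close: the only control on $\partial_t u_{>\tau}$ in the interior that your multiplier identity produces carries the weight $(|x|/\la x\ra)^m$, which degenerates at $x=0$, so turning $\tau^{-1}\|\partial_t u_{>\tau}\|_{L^2_{|x|\lesssim 1}}$ back into something on the left-hand side is impossible. In the paper, the lossless Hardy contribution comes instead from the $u^2$-coefficient $-\tfrac12 a^{-2}\partial_x(a^2\partial_x g)$ in \eqref{ibp}: one verifies by direct computation that it is nonnegative for the chosen $g$, and then a Poincar\'e-type integration by parts \eqref{lot2} removes the residual vanishing of the resulting coefficient near $x=0$. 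No appeal to time-frequency is made for this term.

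Second, the place where the high-frequency hypothesis actually enters is different and requires a large radius, not a fixed one. Cutting to a bounded region produces a commutator error localized on an annulus, and when the interior estimate on $|x|<R$ (Lemma~\ref{hf_lemma}) is combined with the exterior one (Proposition~\ref{prop.exterior}, taken with $R_1=R/2$), the residual error has the form $R^{-2}\|u_{>\tau}\|^2_{LE_{|x|\approx R}}$. This is controlled via $\tau^{-2}R^{-3}\|\partial_t u_{>\tau}\|^2_{L^2L^2_{|x|\approx R}}$, which is absorbable into the left only because one can first choose $R\gg\tau^{-1}$. With a cutoff frozen at scale $O(1)$, there is no parameter to make the commutator error small relative to the left-hand side, and the bootstrap fails. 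This is why the exterior proposition is stated with two radii $R>R_1$—the gap between them produces the factor $R^{-1}R_1^{-1}$ that drives the absorption.

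Two smaller points. The interior multiplier in the paper (essentially $f(x)=x/(Ra(x/R^2))$) is bounded and does not itself carry $(|x|/\la x\ra)^m$; the weight in \eqref{HFmain} appears only because the duality pairing $\int\Box_\g u\cdot\partial u$ is split as $\|(\la x\ra/|x|)^m\Box_\g u\|_{LE^*}\cdot\|(|x|/\la x\ra)^m\partial u\|_{LE}$ on part of the forcing. And for \eqref{HFmain2} no dyadic decomposition in $\la D_\omega\ra$ or Young inequality is needed: since $\la D_\omega\ra$ is self-adjoint on the sphere and commutes with multiplication by functions of $x$, one simply inserts $\la D_\omega\ra^{\pm\frac{m-1}{2(m+1)}}$ inside the pairing and applies $LE$--$LE^*$ duality directly; the additional $\|\Box_\g u\|_{LE^*}^2$ comes from Cauchy--Schwarz on the $\int\Box_\g u\cdot\la x\ra^{-1}u$ portion after absorbing $\|\la x\ra^{-1}u\|_{LE}$.
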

It is here that the effects of trapping are observed, as is evidenced
by the coefficients that vanish at the location of the trapping.

The above will be combined with the following estimate for
sufficiently small frequencies.
\begin{prop}\label{LFprop}
 For any $\tau>0$ sufficiently small, we have
 \begin{equation}
   \label{LFmain}
   \|u_{<\tau}\|^2_{LE^1} \lesssim E[u](0) + \|\Box_\g u\|^2_{L^1L^2 + 
   LE^*}.
 \end{equation}
\end{prop}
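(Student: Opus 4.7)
The plan is to combine the Morawetz estimate of Theorem~\ref{thm_lossy} applied to $u_{<\tau}$ with two further ingredients that exploit its low time-frequency support. Since $\beta(D_t/\tau)$ commutes with $\Box_\g$ and is bounded on $L^1L^2+LE^*$, applying Theorem~\ref{thm_lossy} directly to $u_{<\tau}$ gives
\begin{equation*}
\|\partial_x u_{<\tau}\|_{LE}^2 + \|\la x\ra^{-1}u_{<\tau}\|_{LE}^2 + \Bigl\|\frac{|x|^m}{\la x\ra^m}\partial_t u_{<\tau}\Bigr\|_{LE}^2 + \Bigl\|\frac{|x|^m}{\la x\ra^m}\frac{|\ang_0 u_{<\tau}|}{a(x)}\Bigr\|_{LE}^2 \lesssim E[u](0) + \|\Box_\g u\|^2_{L^1L^2+LE^*}.
\end{equation*}
Hence the only parts of $\|u_{<\tau}\|_{LE^1}$ not yet controlled are $\partial_t u_{<\tau}$ and $a^{-1}|\ang_0 u_{<\tau}|$ in a fixed neighborhood $\{|x|\le R_0\}$ of the trapped set, where the vanishing weight $|x|^m/\la x\ra^m$ degenerates.

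Next, the missing $\partial_t$ piece is recovered by a Plancherel estimate in time. Since $u_{<\tau}$ has Fourier support in $\{|\eta|\le\tau\}$ with respect to $t$, for any smooth cutoff $\chi$ supported in $\{|x|\le 2R_0\}$ (on whose support $LE$ and $L^2_{t,x}$ are equivalent up to constants),
\begin{equation*}
\|\chi\,\partial_t u_{<\tau}\|_{LE}^2 \lesssim \|\chi\,\partial_t u_{<\tau}\|_{L^2_{t,x}}^2 \le \tau^2\|\chi\,u_{<\tau}\|_{L^2_{t,x}}^2 \lesssim \tau^2 \|\la x\ra^{-1}u_{<\tau}\|_{LE}^2.
\end{equation*}
For the angular piece, pair the elliptic identity $Pu_{<\tau}=\partial_t^2 u_{<\tau}+\beta(D_t/\tau)\Box_\g u$, where $Pu=a^{-2}\partial_x(a^2\partial_x u)+a^{-2}\angdelta_{\S^2}u$, against $\chi\,\bar u_{<\tau}$ and integrate by parts in space-time. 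The negativity of $P$ produces the bulk identity
\begin{equation*}
\int_0^T\!\!\int \chi\bigl[(\partial_x u_{<\tau})^2 + a^{-2}|\ang_0 u_{<\tau}|^2\bigr]\,dV\,dt \lesssim \int_0^T\!\!\int \chi\,|\partial_t u_{<\tau}|^2\,dV\,dt + \text{(commutator, boundary, and forcing terms)},
\end{equation*}
where the commutator terms $\chi'\partial_x u_{<\tau}\,\bar u_{<\tau}$ live on $\{|x|\sim R_0\}$ and are controlled by the first step, the time-boundary terms are absorbed into $\sup_t E[u_{<\tau}](t)\lesssim E[u](0)+\|\Box_\g u\|^2_{L^1L^2}$ via \eqref{energy}, and the pairing with $\beta(D_t/\tau)\Box_\g u$ is handled by $LE^*$/$LE^1$ duality.

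Combining the three steps and choosing $\tau$ small enough to absorb the $\tau^2\|\la x\ra^{-1}u_{<\tau}\|_{LE}^2$ contribution into the left-hand side of the first step yields \eqref{LFmain}. The main obstacle is making the Plancherel step fully rigorous, since the $LE$ norm is taken over $[0,T]$ while the Fourier-multiplier bound for $\beta(D_t/\tau)$ is naturally stated on $\R_t$; one addresses this by extending $u$ to all $t\in\R$ via the wave equation (using finite propagation speed to preserve compactness in $x$) and absorbing the resulting temporal boundary contributions into the conserved energy.
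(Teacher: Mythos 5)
Your first step is circular. You invoke Theorem~\ref{thm_lossy} applied to $u_{<\tau}$, but the paper constructs Theorem~\ref{thm_lossy} by combining \eqref{energy}, \eqref{HFmain} (Proposition~\ref{HFprop}), and \eqref{LFmain} (Proposition~\ref{LFprop}); thus \eqref{LFmain} — exactly the statement you are proving — is a logical prerequisite of Theorem~\ref{thm_lossy} and cannot be cited here. This is the one genuine defect in the proposal.

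The rest is in the right spirit and, once the circularity is repaired, essentially recovers the paper's argument. Replace the appeal to Theorem~\ref{thm_lossy} with its non-circular ingredients: the exterior estimate of Proposition~\ref{prop.exterior} (applied with $R_1=2$) together with an interior estimate. The paper's interior estimate, Lemma~\ref{lf_interior}, is precisely your Step 3 carried out directly: applying \eqref{ibp} with $f\equiv 0$ and $g(x)=1/a(x)$ gives the Lagrangian identity \eqref{LagrangianCorrection}, which coerces $(\partial_x w)^2$, $a^{-2}|\ang_0 w|^2$, and a lower-order term against $(\partial_t w)^2$ plus commutator and time-boundary errors — so there is no need to first establish a weighted estimate and then patch the angular piece. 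Your Plancherel step (Step 2) is then the implicit mechanism by which the paper uses ``$\tau$ sufficiently small'' to absorb the residual $\|\la x\ra^{-1/2}\partial_t u_{<\tau}\|^2_{L^2L^2_{|x|<2R}}$ error into the left-hand side. The $[0,T]$-versus-$\R$ subtlety you flag at the end is real but is present in the paper's proof as well, where the $\tau$-smallness absorption is stated without comment; it is a standard finesse rather than a gap specific to your argument, though your sketched resolution (extending $u$ to all time and absorbing temporal boundary terms into the energy) is not quite right as written, since the solution's $L^2_t(\R)L^2_x$ norm is generically infinite.
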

The vanishing at the trapping is irrelevant here as trapping is a high
frequency phenomenon.

As the metric is stationary, it is trivial to commute the frequency
cutoff with $\Box_\g$.  Upon combining \eqref{energy}, \eqref{HFmain}, and
\eqref{LFmain}, Theorem \ref{thm_lossy} results.

All of the results in this section come from a multiplier method and
an associated integration by parts.  At this point, we will record the
following abstract calculation that shall be used many times in the
sequel.  Suppose $w, g\in C^2$, $f\in C^1$, and for each $t$, $w(t,x)$ vanishes
for large enough $|x|$.  Then
  \begin{multline}\label{ibp}
    -\int_0^T\int \Box_\g w \Bigl\{f(x) \partial_x w +
g(x) w\Bigr\} \,dV\,dt = \int \partial_t w
    \Bigl(f(x)\partial_x w +
    g(x) w\Bigr)
    dV\Bigl|_0^T
\\+\int_0^T\int \Bigl(f'(x) + g(x) - \frac{1}{2}\Bigl\{a(x)^{-2}\partial_x(a(x)^2
f(x))\Bigr\}\Bigr) (\partial_x w)^2 \,dV\,dt
\\+ \int_0^T\int \Bigl(f(x)\frac{a'(x)}{a(x)} + g(x) - \frac{1}{2}\Bigl\{a(x)^{-2}\partial_x(a(x)^2
f(x))\Bigr\}\Bigr) \frac{1}{a(x)^2}|\ang_0 w|^2\,dV\,dt
\\
+ \int_0^T\int \Bigl(-g(x) + \frac{1}{2}\Bigl\{a(x)^{-2}\partial_x(a(x)^2
f(x))\Bigr\}\Bigr) (\partial_t w)^2\,dV\,dt
\\
-\frac{1}{2}\int_0^T\int \Bigl(a(x)^{-2}\partial_x[a(x)^2\partial_x g]\Bigr)w^2\,dV\,dt.
  \end{multline}

\subsection{Exterior estimates}
We first establish an estimate away from $x=0$, which is where the
trapping occurs.  This estimate shows that the local energy estimates
necessarily hold near the infinite ends with a lower order error term
that is supported on a compact region.  An estimate analogous to this
was first established in \cite{MMT_Str_Sch} for the Schr\"odinger
equation and in \cite{MST} for more general wave equations.  Here the
warped product structure simplifies the choice of multiplier, but some
care must be taken to accommodate having two ends.

Though our proof does not significantly differ from \cite{MST}, we
have provided a sharper statement that allows for a difference in the
radius outside of which you hope to estimate your solution and the
radius at which you are cutting away and permitting an error term.
This difference in radii provides a degree of smallness that allows us
to simplify the low frequency analysis.

\begin{prop}\label{prop.exterior}
For any parameters $R$ and $R_1$ satisfying $(1/2)R\ge R_1\ge 2$, we have the following:
\begin{equation}
  \label{midext}
  \|u\|_{LE^1_{|x|>R}}^2 \lesssim E[u](0) + \int_0^T\int |\Box_\g
  u|\Bigl(|\partial u| + \frac{1}{a(x)}|u|\Bigr)\,dV\,dt + R^{-1}R_1^{-1}\|u\|^2_{LE_{|x|\approx R_1}}.
\end{equation}
\end{prop}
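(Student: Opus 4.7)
The plan is to apply the integration-by-parts identity \eqref{ibp} with a dyadic family of Morawetz-type multipliers adapted to the two-ended warped product, combined with the energy identity \eqref{energy}. Because $\|\cdot\|_{LE^1_{|x|>R}}$ is defined as a supremum over dyadic annuli $\{|x|\approx 2^k\}$ with $2^k\ge R$, it suffices to prove the estimate at each such scale with constants uniform in $k$ and then take the supremum.

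For each $k$ with $2^k \ge R$, I would choose
\[
f_k(x) = \chi\!\left(\tfrac{|x|}{R_1}\right)\frac{x}{|x|+2^k}, \qquad
g_k(x) = \chi\!\left(\tfrac{|x|}{R_1}\right)\frac{1}{|x|+2^k},
\]
where $\chi$ is a smooth monotone cutoff vanishing on $\{s\le 1/2\}$ and equal to $1$ on $\{s\ge 1\}$. The cutoff $\chi(|x|/R_1)$ is essential: while $a'(x)/a(x) \approx 1/x$ for $|x|\gtrsim 1$, near $x=0$ we have $a'(x)/a(x) \sim x^{2m-1}$, which would cause the $(\partial_t u)^2$ coefficient in \eqref{ibp} to fail to be positive. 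On the exterior $\{|x|>R_1\}$ (where $\chi\equiv 1$), the asymptotic identity $f_k(x)a'(x)/a(x) \approx g_k(x)$ produces precise cancellations in the four bulk coefficients of \eqref{ibp}: the $(\partial_x u)^2$ and $(\partial_t u)^2$ coefficients reduce to $\approx 2^k/(|x|+2^k)^2$, the $a^{-2}|\ang_0 u|^2$ coefficient to $\approx (2|x|+2^k)/(2(|x|+2^k)^2)$, and the $u^2$ coefficient to $\approx 2^k/(|x|(|x|+2^k)^3)$. All four are non-negative, and at the scale $|x|\approx 2^k$ they produce exactly the weights needed to dominate $\|u\|^2_{LE^1_{|x|\approx 2^k}}$.

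The boundary term $[\int \partial_t u\,(f_k\partial_x u + g_k u)\,dV]_0^T$ is bounded via Cauchy--Schwarz and a Hardy-type inequality on the warped product by $C(E[u](0)+E[u](T))$, and \eqref{energy} then absorbs $E[u](T)$ into $E[u](0)+\int|\Box_\g u\,\partial_t u|\,dV\,dt$. The forcing term $-\int \Box_\g u\cdot(f_k\partial_x u + g_k u)\,dV\,dt$ is dominated uniformly in $k$ by $\int|\Box_\g u|(|\partial u|+a(x)^{-1}|u|)\,dV\,dt$, using $|f_k|\le 1$ and $|g_k|\lesssim a(x)^{-1}$. The remaining contributions come from derivatives of the cutoff, which are localized to $\{|x|\approx R_1\}$. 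The sharpest such error arises in the $u^2$ coefficient $-\tfrac12 a^{-2}(a^2 g_k')'$, where the transition of $g_k$ on scale $R_1$ gives $|g_k''|\lesssim (R_1^2\cdot 2^k)^{-1}$, producing a bulk error bounded by $(R_1\cdot 2^k)^{-1}\|u\|^2_{LE_{|x|\approx R_1}} \le (RR_1)^{-1}\|u\|^2_{LE_{|x|\approx R_1}}$ since $2^k\ge R$; this is the error term in \eqref{midext}. Analogous but subleading contributions to the spatial, angular, and temporal coefficients at the cutoff annulus must be tracked carefully, and in particular the angular coefficient $(\chi - \chi')/(2\cdot 2^k)$ has a sign-changing structure whose annular average vanishes, so after integration by parts in $x$ the net contribution can be absorbed into the $u^2$-type error.

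The main obstacle is verifying the simultaneous non-negativity of all four bulk coefficients on $\{|x|>R_1\}$: this relies on the Minkowski-style cancellation $g_k \approx f_k a'/a$, valid where $a'/a\approx 1/x$, but failing near the origin where $a'/a \sim x^{2m-1}$. The interior cutoff at $R_1$ is designed precisely to remove this problematic region, and the quantitative $R^{-1}R_1^{-1}$ scaling of the cutoff error follows from the multiplier amplitude being $\sim R_1/2^k$ in the transition zone together with the standard LE normalization. Taking the supremum over $k$ with $2^k\ge R$ yields \eqref{midext}.
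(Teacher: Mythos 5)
Your overall approach — the abstract identity \eqref{ibp} applied with a scaled Morawetz multiplier $\chi(|x|/R_1)\frac{x}{|x|+\rho}$, cut off away from the trapping, plus the energy identity \eqref{energy} and a supremum over the scaling parameter $\rho$ — is the same as the paper's, and the paper also proves the estimate with a continuous parameter $\rho\ge R$ rather than dyadically. The substantive difference, and the gap, is your choice of the Lagrangian correction $g$.

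You take $g_k(x)=\chi(|x|/R_1)\frac{1}{|x|+2^k}$, the Minkowski choice. The paper instead sets $g(x)=\frac12 a(x)^{-2}h(x)\partial_x\bigl[(1-\beta(|x|/R_1))a(x)^2\bigr]$. That specific choice is engineered so that the derivatives of the cutoff cancel \emph{exactly} in both the $(\partial_t u)^2$ coefficient and the $a^{-2}|\ang_0 u|^2$ coefficient, leaving
$(1-\beta)\,\frac12 h'$ and $(1-\beta)\bigl(\frac{a'}{a}h-\frac12 h'\bigr)$ respectively, both manifestly non-negative; the only cutoff derivative in the $(\partial_x u)^2$ coefficient is $-R_1^{-1}\beta'(|x|/R_1)\,\mathrm{sgn}(x)h(x)\ge 0$, which helps. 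With your $g_k$ this cancellation does not happen. The angular coefficient becomes
\[
\chi(|x|/R_1)\Bigl(\frac{1}{|x|+\rho}-\frac12 h'(x)\Bigr)\;-\;\frac{1}{2R_1}\chi'(|x|/R_1)\,\mathrm{sgn}(x)\,h(x),
\]
and on the transition annulus $\{R_1/2\le|x|\le R_1\}$ the second term is comparable in size to the first and carries the wrong sign (since $\chi'\ge 0$ and $\mathrm{sgn}(x)h\ge 0$), so the coefficient of $a^{-2}|\ang_0 u|^2$ can be negative there. This negative contribution is not controlled by the target estimate: the left side $\|u\|^2_{LE^1_{|x|>R}}$ gives no information at $|x|\approx R_1\le R/2$, and the error term $R^{-1}R_1^{-1}\|u\|^2_{LE_{|x|\approx R_1}}$ controls only $u$, not $\ang_0 u$.

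Your proposed remedy — observing that the coefficient $\approx(\chi-\chi')/(2\rho)$ has some oscillatory structure and claiming the contribution can be removed "after integration by parts in $x$" and absorbed into the $u^2$ error — does not work. The quantity being multiplied is $a^{-2}|\ang_0 u|^2$, which is pointwise non-negative; a sign-indefinite coefficient against a non-negative density cannot be dispatched by appealing to vanishing of an annular average, and the $|x|$-average of $\chi-\chi'$ over the transition region does not in fact vanish. Integrating $\partial_x$ onto $a^{-2}|\ang_0 u|^2$ produces cross terms $\ang_0 u\cdot\ang_0\partial_x u$ at $|x|\approx R_1$, which are again not controlled by the permitted error term. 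To close the argument you should instead adopt the paper's choice of $g$ (or an equivalent one), which is precisely designed to confine the cutoff derivatives to the $(\partial_x u)^2$ coefficient, where their sign is favorable, and to the $u^2$ coefficient, where they produce an error term of exactly the form $\rho^{-1}R_1^{-2}\mathbf{1}_{|x|\approx R_1}$ that yields the $R^{-1}R_1^{-1}\|u\|^2_{LE_{|x|\approx R_1}}$ on the right side of \eqref{midext}.
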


\begin{proof}
We shall use \eqref{ibp} with 
\begin{align*}f(x)&=(1-\beta(|x|/R_1))h(x),\qquad h(x)=\frac{x}{|x|+\rho}, \qquad \rho\ge
R,\\ 
g(x) &= \frac{1}{2}a(x)^{-2} h(x)\partial_x \Bigl[(1-\beta(|x|/R_1))
       a(x)^2\Bigr].
\end{align*}
 We can then compute
\[h'(x) = \frac{\rho}{(|x|+\rho)^2},\quad h''(x) = -\frac{2\rho\,
  \text{sgn}(x)}{(|x|+\rho)^3}.\]

We examine the coefficients of each term in the right side of
\eqref{ibp}.  To start, using that $\beta$ is a monotonically
decreasing function, we have
\begin{equation}\begin{split}\label{extDx}
  f'(x)+g(x) - \frac{1}{2}\Bigl\{a(x)^{-2}\partial_x(a(x)^2
  f(x))\Bigr\} &= \frac{1}{2} (1-\beta(|x|/R_1)) h'(x) - R_1^{-1}
                 \beta'(|x|/R_1) \text{sgn}(x) h(x)\\
&\ge \frac{1}{2}(1-\beta(|x|/R_1)) \frac{\rho}{(|x|+\rho)^2}.
\end{split}
\end{equation}
And moreover, the right side is $\approx 1/\rho$ on $|x|\approx \rho$.

For the angular derivatives, we have
\begin{equation}\label{extAng}
f(x)\frac{a'(x)}{a(x)} + g(x) -
\frac{1}{2}\Bigl\{a(x)^{-2}\partial_x(a(x)^2 f(x))\Bigr\} =
(1-\beta(|x|/R_1)) \Bigl(\frac{a'(x)}{a(x)}h(x) - \frac{1}{2}h'(x)\Bigr).
\end{equation}
We record that
$\frac{a'(x)}{a(x)}h(x) - \frac{1}{2}h'(x) \ge
\frac{1}{2}\frac{|x|}{(|x|+\rho)^2}$ on the support of
$1-\beta(|x|/R_1)$.  And this coefficient is also $\approx 1/\rho$ on
$|x|\approx \rho$.

For the time derivatives, we simply get
\begin{equation}
  \label{extDt}
  -g(x)+\frac{1}{2}\Bigl\{a(x)^{-2}\partial_x(a(x)^2 f(x))\Bigr\} =
 \frac{1}{2} (1-\beta(|x|/R_1)) h'(x),
\end{equation}
which is everywhere non-negative and $\approx 1/\rho$ on $|x|\approx
\rho$.

It remains to examine the lower order term.  We compute
\begin{multline}
  \label{extLot}
  -\frac{1}{4}
\Bigl\{a(x)^{-2}\partial_x\Bigl[a(x)^2\partial_x\Bigl(h(x)a(x)^{-2}\partial_x
(a(x)^2)\Bigr)\Bigr]\Bigr\} \\= - \frac{1}{2}
\frac{x^{2m-1}}{1+x^{2m}}h''(x) + \frac{2m-1}{(1+x^{2m})^2}
  x^{2m-2}\Bigl(\frac{h(x)}{x} - h'(x)\Bigr) +
  \frac{2m-1}{(1+x^{2m})^3}x^{2m-3} m(x^{2m}-1)h(x).
\end{multline}
Here we observe that $-\frac{x^{2m-1}}{1+x^{2m}}h''(x)$ is everywhere
non-negative and is $\gtrsim \rho^{-3}$ when $|x|\approx \rho$.
Moreover, we have that $h(x)/x\ge h'(x)$, which gives that the second
term in the right side of \eqref{extLot} is nonnegative.  And finally,
the last term of \eqref{extLot} is easily seen to be non-negative on the
support of $(1-\beta(|x|/R_1))$.  We now account for the error term
that results when derivatives land on the cutoff.  Here we see that
\begin{multline}
  \label{extLotErr}
 \Bigl| -\frac{1}{2}a(x)^{-2}\partial_x[a(x)^2 \partial_x g(x)] + \frac{1}{4}(1-\beta(|x|/R_1))
\Bigl\{a(x)^{-2}\partial_x\Bigl[a(x)^2\partial_x\Bigl(h(x)a(x)^{-2}\partial_x
(a(x)^2)\Bigr)\Bigr]\Bigr\} \Bigr|\\\lesssim \rho^{-1} R_1^{-2} \mathbf{1}_{|x|\approx R_1}.
\end{multline}

Using each of these analyses in \eqref{ibp}, we have established
\begin{multline*}
- \int_0^T\int \Box_\g u \Bigl\{f(x)\partial_x u + g(x)
u\Bigr\}\,dV\,dt - \int \partial_t u \Bigl(f(x)\partial_x u + g(x) u\Bigr)\,dV\Bigl|_0^T
\\\gtrsim \rho^{-1}\int_0^T\int_{|x|\approx \rho} |\partial
u|^2\,dV\,dt + \rho^{-3} \int_0^T \int_{|x|\approx \rho} u^2\,dV\,dt -
\rho^{-1} R_1^{-2} \int_0^T\int_{|x|\approx R_1} u^2\,dV\,dt.
\end{multline*}
By the Schwarz inequality, we may bound, independently of $\rho$,
\[\int f(x) \partial_t u \partial_x u\,dV\lesssim E[u](t),\]
to which we can, in turn, apply \eqref{energy}.
The term
\[\frac{1}{2} \int a(x)^{-2}h(x)\partial_x ((1-\beta(|x|/R_1)) a(x)^2) u\partial_t
u\,dV\]
is handled similarly when combined with the following variant of a
Hardy inequality:
\begin{equation}\label{Hardy}\int a(x)^{-2} u^2\,dV\lesssim \int
  (\partial_x u)^2\,dV.
\end{equation}
To apply such, notice that 
\[a(x)^{-2}h(x)\partial_x (\beta(|x|) a(x)^2)\lesssim a(x)^{-1}.\]
To prove
\eqref{Hardy}, we integrate by parts to see
\[\int a(x)^{-2}u^2\,dV = -2\int x u \partial_x u \,dx\,d\sigma
\lesssim \int \Bigl|\frac{x}{a(x)}\Bigr| \Bigl|\frac{u}{a(x)}\Bigr| |\partial_x
u|\,dV\lesssim \Bigl(\int a(x)^{-2}u^2\,dV\Bigr)^{1/2} \Bigl(\int
(\partial_x u)^2\,dV\Bigr)^{1/2}.\]

Applying these bounds to the time boundary terms and combining with
\eqref{energy}, we have
\begin{multline*}
\sup_{t\in [0,T]} E[u](t) + \rho^{-1}\int_0^T\int_{|x|\approx \rho} |\partial
u|^2\,dV\,dt + \rho^{-3} \int_0^T \int_{|x|\approx \rho} u^2\,dV\,dt
\\\lesssim E[u](0) + \int_0^T\int |\Box_\g u|\Bigl(|\partial u| +
\frac{|u|}{a(x)}\Bigr)\,dV\,dt + \rho^{-1} R_1^{-1} \|u\|^2_{LE_{R_1}}.
\end{multline*}
Taking the supremum over $\rho\ge R$ yields \eqref{midext}.
\end{proof}

\subsection{High frequency estimate (Proposition \ref{HFprop})}

In this section, we prove a local energy estimate for any range of
time-frequencies that is bounded away from $0$. 
To begin, we establish the estimate on a large ball, which
will be supplemented with \eqref{midext}.

\begin{lem}\label{hf_lemma}
For any $R>0$ sufficiently large, we have
\begin{multline}
  \label{int}
\frac{1}{R}
\Bigl\|(1+x^{2m})^{-1/2} \partial_x u\Bigr\|^2_{L^2 L^2_{|x|<R/2}}
+\frac{1}{R^{4m+1}} \Bigl\||x|^m (1+x^{2m})^{-1/2} a(x)^{-1} |\ang_0
u|\Bigr\|^2_{L^2 L^2_{|x|<R/2}}
\\+\frac{1}{R}
 \Bigl\| |x|^m (1+x^{2m})^{-1/2} \partial_t
 u\Bigr\|^2_{L^2L^2_{|x|<R/2}}
+\frac{1}{R^{4m+1}} \Bigl\|\la x\ra^{-1} u\Bigr\|^2_{L^2
  L^2_{|x|<R/2}}
\\ \lesssim  E[u](0)+ \int_0^T\int_{|x|<R} |\Box_\g u| \Bigl(|\partial u| +
    |\la x\ra^{-1} u|\Bigr)\,dV\,dt + \|u\|_{LE^1_{|x|\approx R}}^2.
\end{multline}
\end{lem}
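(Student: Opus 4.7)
The plan is to apply the integration-by-parts identity \eqref{ibp} on $\{|x|\le R\}$ with a compactly-supported multiplier pair $(f,g)$ designed so that the four bulk coefficients produced in \eqref{ibp} reproduce, on $\{|x|<R/2\}$, the weighted terms on the left-hand side of \eqref{int}. The scheme parallels Proposition \ref{prop.exterior}, with the key structural difference that the trapping at $x=0$ is degenerate of order $2m$: positivity of both the $(\partial_t u)^2$ and the $a^{-2}|\ang_0 u|^2$ coefficients at $x=0$ pins $g(0)=\tfrac12 f'(0)$ exactly, and it is this pinning that forces the vanishing to order $x^{2m}$ on those two coefficients, producing the weight $|x|^{2m}/(1+x^{2m})$ seen on the left of \eqref{int}.

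A workable ansatz, inspired by the nondegenerate construction of \cite{MMTT_Str_BH}, is
\[
f(x) = \beta(|x|/R)\,\frac{x}{\sqrt{R^2+x^2}}, \qquad g(x) = \tfrac12 f'(x) + \epsilon\,\frac{a'(x)}{a(x)}\,f(x),
\]
with $\epsilon\in(1/5,1)$ fixed. Using $a'(x)/a(x)=x^{2m-1}/(1+x^{2m})$, a direct expansion on $\{|x|<R/2\}$ (where $\beta\equiv 1$) gives the bulk coefficients $(1-\epsilon)\frac{x^{2m}}{(1+x^{2m})\sqrt{R^2+x^2}}$ on $(\partial_t u)^2$; $\epsilon$ times the same quantity on $a^{-2}|\ang_0 u|^2$; and $\frac{1}{\sqrt{R^2+x^2}}\bigl[\frac{R^2}{R^2+x^2}-(1-\epsilon)\frac{x^{2m}}{1+x^{2m}}\bigr]$ on $(\partial_x u)^2$. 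The first two saturate the desired weight $R^{-1}|x|^{2m}(1+x^{2m})^{-1}$, while the third is $\gtrsim R^{-1}$ once $\epsilon>1/5$, and in particular $\gtrsim R^{-1}(1+x^{2m})^{-1}$. The zeroth-order coefficient $-\tfrac12 a^{-2}\partial_x[a^2\partial_x g]$ is handled by a direct computation, possibly after adding a small correction to $g$; the generous exponent $4m+1$ in the $R^{-(4m+1)}\la x\ra^{-2}$ weight on the $u^2$ term in \eqref{int} leaves considerable slack.

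The boundary and cutoff contributions are handled in parallel with Proposition \ref{prop.exterior}. The $t=0, T$ boundary terms on the right of \eqref{ibp} are bounded by $E[u](0)+E[u](T)$ through the Schwarz inequality and the Hardy inequality \eqref{Hardy}, with $E[u](T)$ absorbed via the energy identity \eqref{energy} at the cost of the forcing $\int|\Box_\g u||\partial u|$. Derivatives falling on $\beta(|x|/R)$ are supported in $\{|x|\approx R\}$ and produce exactly the $\|u\|^2_{LE^1_{|x|\approx R}}$ error allowed on the right of \eqref{int}. Pairing $\Box_\g u$ directly with $f\partial_x u + g u$, using $|f|\lesssim 1$ and $|g|\lesssim \la x\ra^{-1}$ on the support, yields the $\int|\Box_\g u|(|\partial u|+\la x\ra^{-1}|u|)$ contribution.

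The main obstacle is the multiplier design itself. The pinning $g(0)=\tfrac12 f'(0)$ forced by the degenerate trapping leaves only a narrow admissible window for $g$, and positivity of the $(\partial_x u)^2$ coefficient throughout $\{|x|<R/2\}$ requires the ratio $(a'(x)/a(x))\,f(x)/f'(x)$ to stay bounded away from $1$, which is what forces $\epsilon$ strictly above the threshold $1/5$ (and strictly below $1$ for the $(\partial_t u)^2$ coefficient to remain positive). The explicit form $a(x)=(1+x^{2m})^{1/(2m)}$ is what makes this ansatz close cleanly; the same strategy would apply to more general warped products with degenerate trapping but would require a different specific $f$. Once a valid multiplier is in hand, verifying the four bulk coefficients is essentially algebra and the remainder of the argument mirrors the proof of Proposition \ref{prop.exterior}.
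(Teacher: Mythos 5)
Your overall strategy—apply \eqref{ibp} on a large ball with a multiplier $f$ that looks like $x/R$ near the origin and a Lagrangian correction $g=\tfrac12 f'+(\text{correction})\cdot\tfrac{a'}{a}f$—is the same as the paper's. The pinning $g(0)=\tfrac12 f'(0)$ and the resulting $|x|^{2m}$ vanishing in the $(\partial_t u)^2$ and angular coefficients are correctly identified, and the treatment of the time-boundary, cutoff, and forcing terms is in order. However, the choice of multiplier does not close.

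The gap is the size of the coefficient $\epsilon$ in $g$. With $f(x)=\beta(|x|/R)\,x/\sqrt{R^2+x^2}$, the $(\partial_x u)^2$ coefficient is $f'-(1-\epsilon)\tfrac{a'}{a}f$, whose positivity on $\{|x|<R/2\}$ \emph{forces} $\epsilon>1/5$ as you note: there is no small-$\epsilon$ regime with this $f$. But that lower bound is fatal for the zeroth-order term. The lower-order coefficient is $-\tfrac12 a^{-2}\partial_x(a^2\partial_x g)$, which splits into a good piece from $\tfrac12 f'$ of size $\sim R^{-3}$ for $|x|=O(1)$, plus a piece from $\epsilon\tfrac{a'}{a}f$ whose dominant contribution near $|x|=O(1)$ is $\sim -\epsilon\,m(2m-1)\tfrac{x^{2m-2}}{R(1+x^{2m})}$, i.e.\ wrong-signed and of order $\epsilon/R$—a factor $\epsilon R^2$ larger than the good piece and not absorbable by any of the other bulk terms. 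Your remark that the exponent $4m+1$ in the target weight $R^{-(4m+1)}\la x\ra^{-2}$ ``leaves considerable slack'' misreads the situation: that small factor appears on the \emph{left-hand side} of \eqref{int} and so makes the required \emph{positive} lower bound weak, but it does nothing to tame a wrong-signed term of order $\epsilon R^{-1}$.

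The fix, and what the paper actually does, is to make $f'-\tfrac{a'}{a}f\ge0$ \emph{without help from} $g$ by building the degeneracy into $f$ itself: take $f(x)=\tfrac{x}{Ra(x/R^2)}$, for which one computes $f'-\tfrac{a'}{a}f=\tfrac{1}{R}\tfrac{1-(x/R)^{4m}}{(1+x^{2m})(1+x^{2m}/R^{4m})^{1+1/(2m)}}\ge0$ on $|x|<R$. One may then take the Lagrangian correction tiny, $g=\tfrac12 f'+\tfrac{\delta}{R^{4m}}\tfrac{a'}{a}f$ with $\delta$ small, so that the bad part of $-\tfrac12 a^{-2}\partial_x(a^2\partial_x g)$ is $\lesssim\delta\tfrac{|x|^{2m-2}}{R^{4m+1}(1+x^{2m})}$ and is absorbed by the good part $\gtrsim\tfrac{|x|^{2m-2}}{R^{4m+1}(1+x^{2m})}$. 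This is why the angular term in \eqref{int} carries $R^{-(4m+1)}$ rather than the $R^{-1}$ your calculation appears to promise: that apparent improvement is an artifact of the $\epsilon=O(1)$ choice, which cannot be sustained through the lower-order estimate.
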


\begin{proof}
We apply \eqref{ibp} with all of the $w=\beta(|x|/R) u$, $f(x) =
\frac{x}{Ra(x/R^2)}$, and $g(x) = \frac{1}{2} f'(x) +
\frac{\delta}{R^{4m}} \frac{a'(x)}{a(x)}f(x)$, and we shall now
examine the coefficients of the last four terms in the right side of
\eqref{ibp}.  Here $\delta>0$ is a small parameter that will be fixed later.

For the coefficient of $(\partial_x w)^2$, noting that
$\frac{a'(x)}{a(x)}f(x)\ge 0$, we have
\[  f'(x)+g(x) - \frac{1}{2}\Bigl\{a(x)^{-2}\partial_x (a(x)^2
  f(x))\Bigr\} 
\ge f'(x) - \frac{a'(x)}{a(x)}f(x)
=\frac{1}{R}\frac{1-\Bigl(\frac{x}{R}\Bigr)^{4m}}{(1+x^{2m})\Bigl(1+\frac{x^{2m}}{R^{4m}}\Bigr)^{1+\frac{1}{2m}}}.
\]
In particular, this is nonnegative on the support of $\beta(|x|/R)$,
and
\begin{equation}\label{dx}\int_0^T\int \Bigl(f'(x) + g(x) - \frac{1}{2}\Bigl\{a(x)^{-2}\partial_x(a(x)^2
f(x))\Bigr\}\Bigr) (\partial_x w)^2 \,dV\,dt \gtrsim \frac{1}{R}
\Bigl\|(1+x^{2m})^{-1/2} \partial_x u\Bigr\|^2_{L^2 L^2_{|x|<R/2}}.
\end{equation}
Here and throughout this proof, all implicit constants are independent of $R$.

For the angular derivatives, we have
\[f(x)\frac{a'(x)}{a(x)} + g(x) - \frac{1}{2}\Bigl\{a(x)^{-2}\partial_x(a(x)^2
f(x))\Bigr\}= \frac{\delta}{R^{4m}}\frac{a'(x)}{a(x)} f(x)
=\frac{\delta}{R^{4m+1}} \frac{x^{2m}}{1+x^{2m}} \Bigl(1+\Bigl(\frac{x}{R^2}\Bigr)^{2m}\Bigr)^{-1/2m}.\]
And hence,
\begin{multline}\label{dang}\int_0^T\int \Bigl(f(x)\frac{a'(x)}{a(x)} + g(x) - \frac{1}{2}\Bigl\{a(x)^{-2}\partial_x(a(x)^2
f(x))\Bigr\}\Bigr) \frac{1}{a(x)^2}|\ang_0 w|^2\,dV\,dt\\\gtrsim
\frac{\delta}{R^{4m+1}} \Bigl\||x|^m (1+x^{2m})^{-1/2} a(x)^{-1} |\ang_0
u|\Bigr\|^2_{L^2 L^2_{|x|<R/2}}.\end{multline}
And for the time derivatives, it follows that
\[-g(x) + \frac{1}{2}\Bigl\{a(x)^{-2}\partial_x(a(x)^2
f(x))\Bigr\} =\Bigl(1-\frac{\delta}{R^{4m}}\Bigr)\frac{a'(x)}{a(x)}f(x)\]
and
\begin{equation}\label{dt}\int_0^T\int \Bigl(-g(x) + \frac{1}{2}\Bigl\{a(x)^{-2}\partial_x(a(x)^2
f(x))\Bigr\}\Bigr) (\partial_t w)^2\,dV\,dt \gtrsim \frac{1}{R}
 \Bigl\| |x|^m (1+x^{2m})^{-1/2} \partial_t
 u\Bigr\|^2_{L^2L^2_{|x|<R/2}}. 
\end{equation}

It remains to examine the lower order term, whose coefficient is
$-\frac{1}{2} a(x)^{-2}\partial_x(a(x)^2\partial_x g)$.  We first
compute
\begin{multline*}-\frac{1}{4} a(x)^{-2}\partial_x (a(x)^2\partial_x f'(x))
\\= \frac{(2m+1)\Bigl(\frac{x}{R^2}\Bigr)^{2m}}{4R x^2
  (1+x^{2m})\Bigl(1+\Bigl(\frac{x}{R^2}\Bigr)^{2m}\Bigr)^{3+\frac{1}{2m}}}
  \Bigl[x^{2m} - 1 - 2\Bigl(\frac{x}{R^2}\Bigr)^{2m} + 2m(1+x^{2m})\Bigl(1-\Bigl(\frac{x}{R^2}\Bigr)^{2m}\Bigr)\Bigr],
\end{multline*}
which can be observed to be nonnegative on $|x|<R$ for any $R$
sufficiently large.  Moreover
\[  -\frac{1}{4}\int_0^T\int \Bigl(a(x)^{-2}\partial_x[a(x)^2\partial_x
  f'(x)]\Bigr)w^2\,dV\,dt \gtrsim \frac{1}{R^{4m+1}} \Bigl\||x|^{m-1}
  (1+x^{2m})^{-1/2} \beta(|x|/R) u\Bigr\|^2_{L^2 L^2}.
\]
On the other hand,
\begin{multline*}-\frac{1}{2R^{4m}}a(x)^{-2}\partial_x \Bigl[a(x)^2\partial_x
\Bigl(\frac{a'(x)}{a(x)}f(x)\Bigr)\Bigr] 
= \frac{1}{2}\frac{x^{2m-2}}{R^{4m+1}(1+x^{2m})}
 \frac{1}{\Bigl(1+\Bigl(\frac{x}{R^2}\Bigr)^{2m}\Bigr)^{2+\frac{1}{2m}}}
\times \\  \Bigl\{ 4m^2 \frac{x^{2m}-1}{(x^{2m}+1)^2}
   \Bigl(1+\Bigl(\frac{x}{R^2}\Bigr)^{2m}\Bigr)^2 -
   \Bigl(\frac{x}{R^2}\Bigr)^{2m}\frac{1}{x^{2m}+1} \Bigl(1-x^{2m} +
   2\Bigl(\frac{x}{R^2}\Bigr)^{2m}\Bigr) \\+ \frac{2m}{(1+x^{2m})^2}\Bigl[1+
   5\Bigl(\frac{x}{R^2}\Bigr)^{2m} +
   x^{4m}\Bigl(\frac{x}{R^2}\Bigr)^{2m} +
   3\Bigl(\frac{x}{R^2}\Bigr)^{4m} 
+ x^{2m}\Bigl(-1+2\Bigl(\frac{x}{R^2}\Bigr)^{2m}+\Bigl(\frac{x}{R^2}\Bigr)^{4m}\Bigr)\Bigr]\Bigr\}.
\end{multline*}
On the support of $\beta(|x|/R)$ for $R$ sufficiently large, it follows that
\[\Bigl|-\frac{\delta}{2R^{4m}}a(x)^{-2}\partial_x \Bigl[a(x)^2\partial_x
\Bigl(\frac{a'(x)}{a(x)}f(x)\Bigr)\Bigr] \Bigr|\lesssim \delta
\frac{|x|^{2m-2}}{R^{4m+1} (1+x^{2m})} \quad\text{ for $|x|<R$}.\]
Thus, if $\delta>0$ is chosen sufficiently small, it follows that
\begin{equation}\label{lot}
  -\frac{1}{2}\int_0^T\int \Bigl(a(x)^{-2}\partial_x[a(x)^2\partial_x
  g]\Bigr)w^2\,dV\,dt \gtrsim \frac{1}{R^{4m+1}} \Bigl\||x|^{m-1}
  (1+x^{2m})^{-1/2} u\Bigr\|^2_{L^2 L^2_{|x|<R/2}}.
\end{equation}

We next provide a simple argument to show that the vanishing of the
coefficient at the origin in \eqref{lot} can be removed.  Indeed,
integration by parts gives
\[\int \beta(|x|) u^2 a(x)^2\,dx = -\int \beta'(|x|) |x| u^2 a^2(x)\,dx
- 2 \int \beta(|x|) x u \partial_x u a^2(x)\,dx - 2\int \beta(|x|)
\frac{x a'(x)}{a(x)} u^2 \, a(x)^2\,dx.\]
Applying the Schwarz inequality to the second term and bootstrapping
yields
\begin{equation}\label{lot2}\int \beta(|x|) u^2\,dV \lesssim \int |\beta'(|x|)| |x| u^2\,dV +
\int \beta(|x|) x^2 (\partial_x u)^2\,dV + \int \beta(|x|)
\frac{x^{2m}}{(1+x^{2m})} u^2\,dV.\end{equation}
Upon inclusion of a factor of $R^{-4m-1}$, each of the terms on the
right side can be controlled by the right sides of \eqref{dx} and \eqref{lot}.

Using \eqref{dx}, \eqref{dt}, \eqref{dang}, \eqref{lot}, and
\eqref{lot2} in \eqref{ibp} gives
\begin{multline}\label{ibp2}
    -\int_0^T\int \Box_\g w \Bigl\{f(x) \partial_x w +
g(x) w\Bigr\} \,dV\,dt - \int \partial_t w
    \Bigl(f(x)\partial_x w +
    g(x) w\Bigr)
    dV\Bigl|_0^T\\\gtrsim \frac{1}{R}
\Bigl\|(1+x^{2m})^{-1/2} \partial_x u\Bigr\|^2_{L^2 L^2_{|x|<R/2}}
+\frac{\delta}{R^{4m+1}} \Bigl\||x|^m (1+x^{2m})^{-1/2} a(x)^{-1} |\ang_0
u|\Bigr\|^2_{L^2 L^2_{|x|<R/2}}
\\+\frac{1}{R}
 \Bigl\| |x|^m (1+x^{2m})^{-1/2} \partial_t
 u\Bigr\|^2_{L^2L^2_{|x|<R/2}}
+\frac{1}{R^{4m+1}} \Bigl\|\la x\ra^{-1} \beta(|x|/R) u\Bigr\|^2_{L^2 L^2}.
  \end{multline}

Noting that $f(x)$ is bounded (independent of $R$) and $|g(x)|\lesssim
1/R$ on the support of $\beta(|x|/R)$, we can apply the Schwarz
inequality and \eqref{Hardy} to bound each of the time-boundary terms
by the energy at that time.  Thus, \eqref{energy} gives
\begin{multline}\label{interior}
   \frac{1}{R}
\Bigl\|(1+x^{2m})^{-1/2} \partial_x u\Bigr\|^2_{L^2 L^2_{|x|<R/2}}
+\frac{\delta}{R^{4m+1}} \Bigl\||x|^m (1+x^{2m})^{-1/2} a(x)^{-1} |\ang_0
u|\Bigr\|^2_{L^2 L^2_{|x|<R/2}}
\\+\frac{1}{R}
 \Bigl\| |x|^m (1+x^{2m})^{-1/2} \partial_t
 u\Bigr\|^2_{L^2L^2_{|x|<R/2}}
+\frac{1}{R^{4m+1}} \Bigl\|\la x\ra^{-1} \beta(|x|/R) u\Bigr\|^2_{L^2
  L^2}
\\\lesssim E[u](0) + \int_0^T\int |\Box_\g w| \Bigl(|\partial_t w| + |\partial_x w| +
    |\la x\ra^{-1} w|\Bigr)\,dV\,dt.
  \end{multline}

We finally examine the nonhomogeneous term and notice
\begin{equation}\label{inhom}\int_0^T \int |[\Box_\g, \beta(|x|/R)] u| \Bigl(|\partial w| + |\la
x\ra^{-1} w|\Bigr)\, dV\,dt 
\lesssim \|u\|^2_{LE^1_R},
\end{equation}
which completes the proof.\end{proof}

We may now establish our main high frequency estimate.

\begin{proof}[Proof of Proposition \ref{HFprop}]
  We apply \eqref{int} and \eqref{midext} (with $R_1=R/2$) to see that
  \begin{multline}
    \label{HFcombined}
    \frac{1}{R}
\Bigl\|(1+x^{2m})^{-1/2} \partial_x u_{>\tau}\Bigr\|^2_{L^2 L^2_{|x|<R}}
+\frac{\delta}{R^{4m+1}} \Bigl\||x|^m (1+x^{2m})^{-1/2} a(x)^{-1} |\ang_0
u_{>\tau} |\Bigr\|^2_{L^2 L^2_{|x|<R}}
\\+\frac{1}{R}
 \Bigl\| |x|^m (1+x^{2m})^{-1/2} \partial_t
 u_{>\tau} \Bigr\|^2_{L^2L^2_{|x|<R}}
+\frac{1}{R^{4m+1}} \Bigl\|\la x\ra^{-1} \beta(|x|/2R) u_{>\tau}\Bigr\|^2_{L^2
  L^2} 
+ \|u_{>\tau}\|^2_{LE^1_{|x|>R}}
\\ \lesssim  E[u](0)+ \int_0^T\int |\Box_\g u| \Bigl(|\partial u| +
    |\la x\ra^{-1} u|\Bigr)\,dV\,dt + R^{-2} \|u_{>\tau}\|_{LE_{|x|\approx R}}^2.
  \end{multline}
Since 
\[R^{-2} \|u_{>\tau}\|_{LE_{|x|\approx R}}^2 \lesssim \tau^{-2} R^{-3}
\|\partial_t u_{>\tau}\|^2_{L^2L^2_{|x|\approx R}},\]
we see that the error term may be bootstrapped provided that $R$ is
chosen sufficiently large (depending on $\tau$).  And from what results, the
desired estimate follows immediately.
\end{proof}

\subsection{Low frequency estimate (Proposition \ref{LFprop})}
Here we establish a local energy estimate for sufficiently low
time-frequencies.
We will again use \eqref{midext}, but we modify the
interior estimate that we couple with it.

\begin{lem}\label{lf_interior}
For any $R>0$, we have
\begin{multline}
  \label{lf}
  \|\la x\ra^{-1/2} \partial_x u\|^2_{L^2 L^2_{|x|<{R/2}}} + \|\la x\ra^{-1/2}
  a(x)^{-1}\ang_0 u\|^2_{L^2L^2_{|x|<{R/2}}} + \|\la x\ra^{-1/2} \partial_t
  u\|^2_{L^2 L^2_{|x|<{R/2}}} + \|\la x\ra^{-m-\frac{3}{2}}
  u\|^2_{L^2L^2_{|x|<{R/2}}} \\\lesssim E[u](0) + \int_0^T\int |\Box_\g
  u|\Bigl(|\partial_t u|+ \frac{1}{a(x)}|u|\Bigr)\,dV\,dt + \|\la
  x\ra^{-1/2} \partial_t u\|^2_{L^2 L^2_{|x| <R}} + \|
  u\|^2_{LE^1_{|x|\approx R}}.
\end{multline}
\end{lem}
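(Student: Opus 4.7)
The plan is to parallel the proof of Lemma \ref{hf_lemma}, applying the integration-by-parts identity \eqref{ibp} to $w = \beta(|x|/R) u$ with multipliers $(f,g)$ adapted to the low-frequency regime. Because trapping is a high-frequency phenomenon, $f$ and $g$ need not vanish at the trapped set $x=0$ and can be chosen bounded independently of $R$, so the boundary and forcing contributions will be uniform in $R$. Moreover, the LHS term $\|\la x\ra^{-1/2}\partial_t u\|^2_{L^2 L^2_{|x|<R/2}}$ is trivially dominated by the corresponding RHS term on $|x|<R$, so the multiplier argument only needs to produce the $\partial_x u$, $\ang_0 u$, and $u$ bounds.

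Concretely, I would take $f(x) = x/\la x\ra$ paired with $g(x) = \tfrac{1}{2}a(x)^{-2}\partial_x(a(x)^2 f(x)) + \epsilon\la x\ra^{-1}$ for small $\epsilon>0$. The principal part of $g$ is engineered to cancel the $\tfrac{1}{2}a^{-2}\partial_x(a^2 f)$ contribution to the $(\partial_t w)^2$ coefficient in \eqref{ibp}, leaving that coefficient equal to $-\epsilon\la x\ra^{-1}$, whose magnitude is $\lesssim \la x\ra^{-1}$ and hence absorbable into the RHS term $\|\la x\ra^{-1/2}\partial_t u\|^2_{L^2 L^2_{|x|<R}}$. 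Using $f'(x) = \la x\ra^{-3}$ and $\tfrac{a'(x)}{a(x)}f(x) = \tfrac{x^{2m}}{(1+x^{2m})\la x\ra}$, a direct computation shows that the coefficients of $(\partial_x w)^2$ and $\tfrac{1}{a^2}|\ang_0 w|^2$ in \eqref{ibp} are each $\gtrsim \la x\ra^{-1}$ throughout $|x|<R/2$.

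For the $\|\la x\ra^{-m-3/2} u\|^2$ bound, the lower-order coefficient $-\tfrac{1}{2} a^{-2}\partial_x(a^2\partial_x g)$ from \eqref{ibp} decays too rapidly to yield the desired weight by itself, so I would supplement with a Hardy-type estimate. Using $a(x) \geq |x|$, one has the pointwise bound $\la x\ra^{-2m-3}\lesssim a(x)^{-2}\la x\ra^{-1}$; a weighted variant of the integration by parts used to prove \eqref{Hardy} then controls $\int \la x\ra^{-1}a(x)^{-2} u^2\,dV$ by $\int \la x\ra^{-1}(\partial_x u)^2\,dV$, modulo a commutator supported on $|x|\approx R$ which contributes to $\|u\|^2_{LE^1_{|x|\approx R}}$.

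The remaining ingredients mirror Lemma \ref{hf_lemma}: time boundary terms are handled by Schwarz, \eqref{Hardy}, and \eqref{energy}, producing the $E[u](0)$ term; the forcing $-\int\int \Box_\g u(f\partial_x w + gw)\,dV\,dt$ uses $|f|\leq 1$ and $|g|\lesssim a(x)^{-1}$, and—to replace the $|\partial_x u|$ factor arising naturally from the spatial multiplier with the $|\partial_t u|$ stated in the RHS—combines a Schwarz bound $\|\Box_\g u\|_{L^1 L^2}\|\partial u\|_{L^\infty L^2}$ with \eqref{energy}; and the commutator $[\Box_\g,\beta(|x|/R)]u$ is a first-order operator supported on $|x|\approx R$ yielding the $\|u\|^2_{LE^1_{|x|\approx R}}$ correction. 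The main obstacle I anticipate is the Hardy bootstrap for the $u^2$ term, which requires careful attention to the exponent $m+\tfrac{3}{2}$: the multiplier's $\la x\ra^{-1}$-weighted $\partial_x u$ bound must be strong enough uniformly in $|x|<R/2$ to close the estimate through the weighted Hardy step, particularly near $x=0$ where $a(x)^{-1}\sim 1$ and the weight relation $\la x\ra^{-2m-3}\lesssim a^{-2}\la x\ra^{-1}$ is saturated.
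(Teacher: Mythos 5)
Your proposal deviates from the paper's proof in a way that introduces several genuine difficulties. The paper's choice of multiplier in Lemma~\ref{lf_interior} is $f\equiv 0$ and $g(x)=1/a(x)$, i.e.\ a pure Lagrangian correction with no first-order vector field at all. With that choice, \eqref{ibp} collapses to identity \eqref{LagrangianCorrection}: the coefficient of $(\partial_x w)^2$ and of $a^{-2}|\ang_0 w|^2$ is exactly $1/a(x)\approx\la x\ra^{-1}$, the coefficient of $(\partial_t w)^2$ is $-1/a(x)$ (negative, hence moved to the right and absorbed by the stated $\|\la x\ra^{-1/2}\partial_t u\|^2_{L^2L^2_{|x|<R}}$ term), and the lower-order coefficient is $-\tfrac12 a^{-2}\partial_x[a^2\partial_x(1/a)]=a''/(2a^2)=\tfrac{2m-1}{2}\tfrac{x^{2m-2}}{(1+x^{2m})^2}\tfrac{1}{a}$, which is \emph{manifestly non-negative} and gives the $\la x\ra^{-2m-3}$ weight; the vanishing of this coefficient at $x=0$ is removed using \eqref{lot2}. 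Crucially, because $f\equiv 0$, the only boundary term is $\int a^{-1}w\partial_t w\,dV$ and the only forcing pairing is $\int a^{-1}w\Box_\g w$, so $\partial_x u$ never appears on the right side---matching the stated right side of \eqref{lf} exactly.

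Your choice $f(x)=x/\la x\ra$ with $g$ designed to nearly cancel the $(\partial_t w)^2$ coefficient creates three problems. First, this cancellation is unnecessary (the stated lemma already permits a $\partial_t u$ term on the right), and it makes $g$ a sum of $\tfrac12\la x\ra^{-3}$, $\tfrac{x^{2m}}{\la x\ra(1+x^{2m})}$, and $\epsilon\la x\ra^{-1}$, so the lower-order coefficient $-\tfrac12 a^{-2}\partial_x[a^2\partial_x g]$ is no longer of definite sign---you have not checked this, and for large $|x|$ the leading contributions cancel, leaving a correction of indeterminate sign. Second, the nonzero $f$ puts $f\partial_x w$ into both the time-boundary term and the forcing pairing, so your argument would produce $\int\!\!\int|\Box_\g u|\,|\partial_x u|$ on the right, which is not what \eqref{lf} says; the workaround you sketch (a Schwarz bound $\|\Box_\g u\|_{L^1L^2}\|\partial u\|_{L^\infty L^2}$) changes the form of the estimate and would require restating the lemma. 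Third, your weighted Hardy step $\int\la x\ra^{-1}a^{-2}u^2\,dV\lesssim\int\la x\ra^{-1}(\partial_x u)^2\,dV$ is, after writing $dV=a^2\,dx\,d\sigma$, equivalent to $\int\la x\ra^{-1}u^2\,dx\lesssim\int\la x\ra(\partial_x u)^2\,dx$, which is a borderline inequality (it is exactly scale-critical on the test family $u=\la x\ra^{-\alpha}$) and does not follow from the integration-by-parts used for \eqref{Hardy}; you would need a separate justification. The paper avoids all three issues at once simply by taking $f\equiv 0$.
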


\begin{proof}
Here we apply \eqref{ibp} with $f\equiv 0$ and $g(x) = 1/a(x)$.  This
gives
\begin{multline}\label{LagrangianCorrection}
  \int_0^T\int \frac{1}{a(x)} \Bigl[(\partial_x w)^2 +
  \frac{1}{a(x)^2}|\ang_0 w|^2\Bigr]\,dV\,dt + \frac{2m-1}{2}\int_0^T\int
  \frac{x^{2m-2}}{(1+x^{2m})^2}\frac{1}{a(x)} w^2\, dV\,dt
\\= -\int \frac{1}{a(x)}w\partial_t w\,dV|_0^T - \int_0^T \int
\frac{1}{a(x)} w \Box_\g w\, dV\,dt + \int_0^T \int
\frac{1}{a(x)} (\partial_t w)^2 \,dV\,dt.
\end{multline}
We may again apply \eqref{lot2} to eliminate the vanishing in the
coefficient of the third term in the left side.  Also applying the
Schwarz inequality, \eqref{Hardy}, and \eqref{energy}, we obtain
\begin{multline*}
  \int_0^T\int \frac{1}{a(x)} \Bigl[(\partial_x w)^2 +
  \frac{1}{a(x)^2}|\ang_0 w|^2\Bigr]\,dV\,dt + \int_0^T\int
\la x\ra^{-2m-2}  \frac{1}{a(x)} w^2\, dV\,dt
\\\lesssim E[w](0) + \int_0^T \int |\Box_\g w|\Bigl(|\partial_t w|+ \frac{1}{a(x)} |w|\Bigr) \, dV\,dt + \int_0^T\int
\frac{1}{a(x)} (\partial_t w)^2 \,dV\,dt.
\end{multline*}
The proof concludes by letting $w = \beta(|x|/R) u$ and using \eqref{inhom}.
\end{proof}

We now combine the preceding, \eqref{lf}, with the exterior estimate
\eqref{midext} in order to establish a local energy estimate for
sufficiently small frequencies.

\begin{proof}[Proof of Proposition \ref{LFprop}]
  We supplement \eqref{lf} (applied to $u_{<\tau}$) with
  \eqref{energy} and
  \eqref{midext} with $R_1=2$.  This gives
  \begin{multline*}
 \|\partial u_{<\tau}\|^2_{L^\infty L^2}+    \|\la x\ra^{-1/2} \partial u_{<\tau}\|^2_{L^2 L^2_{|x|<R}} + \|\la
    x\ra^{-m-\frac{3}{2}} u_{<\tau}\|_{L^2L^2_{|x|<R}}^2 +
    \|u_{<\tau}\|^2_{LE^1_{|x|>R}}\\\lesssim E[u](0) + \|\Box_\g
    u\|_{L^1L^2+LE^*} \|(\partial u_{<\tau}, a(x)^{-1}
    u_{<\tau})\|_{L^\infty L^2\cap LE}
\\+ \|\la x\ra^{-1/2} \partial_t u_{<\tau}\|^2_{L^2 L^2_{|x|<2R}} + R^{-1}
\|u_{<\tau}\|^2_{LE_{|x|\approx 1}}.
  \end{multline*}
Here we have also applied the Schwarz inequality to the inhomogeneous term.
By choosing $R$ sufficiently large, we may bootstrap the last term in
the right side.  The remaining error term can then be absorbed
provided that $\tau$ is sufficiently small (depending on $R$).
\end{proof}

\section{An improved estimate - Proof of Theorem \ref{thm_sharp}}

In order to prove Theorem \ref{thm_sharp}, we define $P=
-\partial_t^2 + \partial_x^2 + a(x)^{-2}\ang_0\cdot\ang_0$ and shall prove:
\begin{prop}\label{sharp_trapping_prop}
  Suppose that $v(t,\cd)$ is supported in $\{|x|<1\}$, that
  $v(t,x)\equiv 0$ for $t\le 0$, and $P v = g$ where $g(t,\cd)$
  is supported in $\{|x|<1\}$ and $g(t,x)\equiv 0$ for $t\le 0$.
  Then, for $\delta>0$,
  \begin{equation}
    \label{sharp}
   \|\partial_x v\|_{LE} +  \|v\|_{\la D_\omega\ra^{\frac{m-1}{2(m+1)}} LE^1} \lesssim
    \|g\|_{\la D_\omega\ra^{-\frac{m-1}{2(m+1)}} LE^* + (|x|/\la
      x\ra)^{(m-1+\delta)/2}LE^*}.
  \end{equation}
\end{prop}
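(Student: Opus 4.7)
The plan is to adapt the WKB-based positive commutator method of \cite{MMTT_Str_BH} to the degenerate critical point of $a$ at $x=0$. Since $D_\omega$ commutes with $\partial_t$, $\partial_x$, and with the $\omega$-independent function $a(x)$, we first decompose $v = \sum_{\mu\ge 1} v_\mu$ via a dyadic Littlewood--Paley partition $v_\mu = P_\mu v$ in the angular frequency, $\mu = 2^k$. The projectors commute with $P$, so $Pv_\mu = P_\mu g$, and on each block, $P$ acts, up to subprincipal order, like the 1D wave operator $-\partial_t^2 + \partial_x^2 - \mu^2/a(x)^2$, whose effective potential $V_\mu(x) = \mu^2/a(x)^2$ attains its maximum at $x=0$ flat to order $2m$. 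The quasiclassical width of the trapped region at frequency $\mu$ is $|x|\sim \mu^{-1/(m+1)}$, obtained by balancing spatial uncertainty $\Delta\xi\sim 1/\Delta x$ against energy spread $\mu^2(\Delta x)^{2m}$, and this scale dictates the construction below.

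For each $\mu$ we propose a positive commutator multiplier of the form $A_\mu = \psi_\mu(x)\partial_x + \tfrac{1}{2}\psi_\mu'(x) + \beta_\mu(x, D_\omega)$, where the spatial profile
\[\psi_\mu(x) = \mu^{-1/(m+1)}\,\psi\bigl(\mu^{1/(m+1)} x\bigr)\]
is a rescaling of a fixed smooth odd increasing bounded function $\psi$ with $\psi(y)\sim y$ near zero, and $\beta_\mu$ is an angular pseudodifferential correction chosen along the lines of the escape-function construction in \cite{MMTT_Str_BH}. The odd sign of $\psi_\mu$ matches the sign of $a'(x)/a(x)\sim x^{2m-1}$ near the origin, producing positivity of the angular term in \eqref{ibp}; the correction $\beta_\mu$ is designed to absorb the $D_\omega$-linear terms on the trapped set that a purely differential multiplier cannot reach. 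Pairing $Pv_\mu$ with $A_\mu v_\mu$ and applying \eqref{ibp} with $f=\psi_\mu$ and $g$ chosen so that the coefficients of $(\partial_x v_\mu)^2$, $a^{-2}|\ang_0 v_\mu|^2$, and $(\partial_t v_\mu)^2$ are all nonnegative yields, after integration by parts, a bulk lower bound from which both $\|\partial_x v_\mu\|_{LE}^2$ (without loss, since $\psi'(0)>0$ is a fixed positive constant) and $\mu^{-(m-1)/(m+1)}\|v_\mu\|_{LE^1}^2$ are controlled.

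The time-boundary contributions are absorbed by the conserved energy, which is in turn bounded via Theorem~\ref{thm_lossy}. Lower-order corrections --- from $\psi_\mu'''$, from commutators of $P_\mu$ with $P$, and from subprincipal terms in replacing $-a^{-2}\angdelta_{\S^2}$ by its diagonal symbol $\mu^2/a^2$ --- vanish at $x=0$ only to a finite order. We absorb these using the $(|x|/\la x\ra)^{(m-1+\delta)/2}LE^*$ slot on the right of \eqref{sharp} together with the lossy bound \eqref{main_lossy2} of Theorem~\ref{thm_lossy}; the slack $\delta>0$ is precisely what permits the subprincipal remainders to be handled, since the WKB-level construction only controls them down to order $|x|^{m-1}$. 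The forcing term $\int A_\mu v_\mu\cdot P_\mu g\,dV\,dt$ is handled by duality against the sharp $LE^1$ norm on the left, splitting $g$ according to the two summands in its norm, and summation over $\mu$ is then by Plancherel/quasiorthogonality in the angular frequency.

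The main obstacle is the construction of the pseudodifferential correction $\beta_\mu$ so that (i) positivity of all three bulk terms is preserved and (ii) the exact exponent $\frac{m-1}{2(m+1)}$ emerges. A purely differential multiplier $\psi_\mu\partial_x$ loses a factor of $\mu$ relative to the sharp target, matching the gap between the WKB kinetic energy $\xi^2\sim \mu^{2/(m+1)}$ and the angular potential $\mu^2/a^2$ at the trapped set; the correction $\beta_\mu$ must close this gap while remaining a bounded operator on the spaces in question. The uniqueness of the rescaling $\mu^{-1/(m+1)}$, forced by the order of vanishing $2m$ of $V_\mu-\mu^2$ at zero, is essential in ensuring that all these scales balance, and it is where the specific exponent $\frac{m-1}{2(m+1)}$ originates.
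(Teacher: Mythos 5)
Your proposal correctly identifies the critical length scale $|x|\sim\mu^{-1/(m+1)}$, the source of the exponent $\frac{m-1}{2(m+1)}$ in a semiclassical scaling count, and the fact that a purely differential multiplier loses a power of $\mu$ on the trapped set. But you yourself flag the decisive step---the construction of the pseudodifferential correction $\beta_\mu(x,D_\omega)$ that is supposed to close the $\mu$-gap while preserving positivity of all three bulk terms---as ``the main obstacle,'' and you do not carry it out. That is precisely where the proof lives, so what you have is a plausible program rather than a proof. Moreover, I am skeptical that a correction of the form $\beta_\mu(x,D_\omega)$, depending only on $x$ and the angular frequency, can do the job. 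The trapping here is a phenomenon on the characteristic variety $\tau^2 = \xi^2 + \lambda^2/a(x)^2$, and the WKB phase and amplitude one needs to mimic involve $\sqrt{\tau^2-\lambda^2/a(x)^2}$, i.e., the \emph{joint} dependence on the time frequency $\tau$ and the angular frequency $\lambda$. By decomposing only in angular frequency via Littlewood--Paley and staying in physical time, you have thrown away access to $\tau$ as a parameter, and your proposed multiplier has no $D_t$-dependence to recover it.

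The paper's proof instead takes the Fourier transform in $t$ \emph{and} expands in spherical harmonics, reducing $Pv=g$ to the one-dimensional ODE $\phi'' + V_{\lambda,\tau}\phi = g$ with $V_{\lambda,\tau}=\tau^2-\lambda^2/a(x)^2$, and then constructs $x$-dependent energy functionals $E[\phi](x)$ directly adapted to $V_{\lambda,\tau}$. The argument splits into four regimes according to the ratio $\tau/\lambda$; the delicate regime $\tau\approx\lambda\gg1$ is handled by writing $V=\lambda^2(b(x)+\varepsilon)$, subdividing further according to the size of $\varepsilon$ relative to $\lambda^{-2m/(m+1)}$, and using functionals of the schematic form $\lambda^2(b+\varepsilon)^{1/2}\phi^2 + (b+\varepsilon)^{-1/2}(\partial_x\phi)^2 + \tfrac12 b'(b+\varepsilon)^{-3/2}\phi\,\partial_x\phi$ whose $\pm\frac14$ powers of $b+\varepsilon$ are exactly the WKB amplitude. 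The exponent $\frac{m-1}{2(m+1)}$ then falls out of Gronwall together with the elementary integral bound $\int_{-1}^1(\lambda^{-2m/(m+1)}+|b(x)+\varepsilon|)^{-1/2}\,dx\lesssim \lambda^{(m-1)/(m+1)}$, and no operator-valued correction is needed because the ODE energy functional already encodes the WKB structure pointwise. The $\delta$-slack in the right-hand side norm is used to make the lower-order terms, e.g.\ $|x|^{(m-1+\delta)/2}$-weighted contributions, integrable near $x=0$, not to absorb subprincipal pseudodifferential remainders. If you want to pursue a space-time commutator proof, you would need, at minimum, to build $\tau$-dependence into the multiplier (i.e., work with $\beta(x,D_t,D_\omega)$ on the $D_t\approx D_\omega$ block) and verify coercivity directly; as written, the proposal leaves that entirely open.
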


We will first show that this suffices to prove Theorem
\ref{thm_sharp}.  To begin, we shall argue that if $v(t,\cd)$ is
supported in $\{|x|<\varepsilon\}$ with $\varepsilon>0$ sufficiently
small then we may replace $P$ by $\Box_\g$ in the above proposition.
Indeed, if $\Box_\g v = g$, then $Pv = g -
2\frac{a'(x)}{a(x)}\partial_x v$.  Then \eqref{sharp} gives
\[\|\partial_x v\|_{LE} + \|v\|_{\la
  D_\omega\ra^{\frac{m-1}{2(m+1)}}LE^1} \lesssim \|g\|_{\la
  D_\omega\ra^{-\frac{m-1}{2(m+1)}} LE^* + (|x|/\la
  x\ra)^{\frac{m-1+\delta}{2}}LE^*} +
\||x|^{\frac{3m-1-\delta}{2}} \partial_x v\|_{LE^*}.\]
Due to the restricted support of $v$, this last term is
$\varepsilon^{\frac{3m-1-\delta}{2}} \O(\|\partial_x v\|_{LE})$ and
may be bootstrapped provided $\varepsilon$ is small enough.

We next argue that the $L^1L^2$ norm may be
added in the right side.  This follows trivially from
\eqref{main_lossy} for the $\partial_x$ components.  
Suppose that $\phi$ solves $\Box_\g \phi = f$
backwards in time and that $\phi(t,x), f(t,x)\equiv 0$ for $t\ge T$.
Then by \eqref{main_lossy2} and \eqref{sharp}, it follows that
\[\|\partial \phi\|_{L^\infty L^2} + \|(|x|/\la x\ra)^m \partial\phi\|_{LE} \lesssim \|\la
D_\omega\ra^{\frac{m-1}{2(m+1)}} f\|_{LE^*}.\]
Hence,
\[\la \partial_t v, f\ra = - \la \Box_\g v, \partial_t\phi\ra = - \la
g, \partial_t\phi\ra 
\le \|g\|_{L^1L^2} \|\partial_t \phi\|_{L^\infty L^2}
\lesssim \|g\|_{L^1
  L^2} \|\la D_\omega\ra^{\frac{m-1}{2(m+1)}} f\|_{LE^*},
\]
and thus by duality,
\begin{equation}\label{dtbound}\| \la D_\omega\ra^{-\frac{m-1}{2(m+1)}} \partial_t v\|_{LE} \lesssim \|g\|_{L^1L^2}.
\end{equation}

Applying \eqref{LagrangianCorrection} with $w=\la
D_\omega\ra^{-\frac{m-1}{2(m+1)}} v$, \eqref{energy}, and
\eqref{Hardy}, we have
\begin{equation}\label{domegabound}\|\la D_\omega\ra^{-\frac{m-1}{2(m+1)}} |\ang_0 v|\|^2_{LE} \lesssim
\|\Box_\g v\|_{L^1L^2} \|\la D_\omega\ra^{-\frac{m-1}{m+1}}
v\|_{L^\infty L^2} + \|\la D_\omega\ra^{-\frac{m-1}{2(m+1)}}\partial_t
v\|^2_{LE}.
\end{equation}
As \eqref{Hardy} and \eqref{energy} permit us to bound
\[\|\la D_\omega\ra^{-\frac{m-1}{m+1}}v\|_{L^\infty L^2} \lesssim
\|a(x)^{-1} u\|_{L^\infty L^2}\lesssim \|g\|_{L^1L^2},\]
\eqref{dtbound} and \eqref{domegabound} give
\begin{equation}\label{angbound}\| \la D_\omega\ra^{-\frac{m-1}{2(m+1)}} |\ang_0 v|\|_{LE} \lesssim \|g\|_{L^1L^2}.
\end{equation}

We have therefore sharpened \eqref{sharp} to say that if $\Box_\g
v=g_1+g_2$ where $g_1\in (|x|/\la x\ra)^{\frac{m-1+\delta}{2}} LE^*$,
$g_2\in L^1L^2$ and if $v(t,x), g_1(t,x), g_2(t,x)$ vanish identically
for $t\le 0$ and if each is supported where
$|x|<\varepsilon$, then
\begin{equation}
  \label{sharp2}
  \|\partial_x v\|_{LE} + \|\la D_\omega\ra^{-\frac{m-1}{2(m+1)}} \partial v\|_{LE} \lesssim
  \inf_{g=g_1+g_2} \Bigl(\|(\la x\ra/|x|)^{\frac{m-1+\delta}{2}} g_1\|_{LE^*}
  + \|g_2\|_{L^1L^2}\Bigr).
\end{equation}

We now proceed to make a few
reductions in order to use \eqref{sharp2} to prove Theorem
\ref{thm_sharp}.  First, it suffices to replace $u$ by $v =
(1-\beta(t)) \beta(|x|/\varepsilon) u$ as
$\|(1-\beta(|x|/\varepsilon))u\|_{LE^1}$ is trivially controlled by the
left side of \eqref{main_lossy}, which provides a better bound, and,
using a Hardy inequality,
$\|\beta(t)\beta(|x|/\varepsilon) u\|^2_{LE^1} \lesssim \sup_{0\le t\le 1}
E[u](t)$, which is appropriately bounded using \eqref{energy}.
We now observe that $v$ satisfies
\begin{multline*}\Box_\g v= 
2 \varepsilon^{-1}\text{ sgn}(x)
\beta'(|x|/\varepsilon)(1-\beta(t)) \partial_x u
+2 \varepsilon^{-1}\text{sgn}(x) (a'(x)/a(x)) \beta'(|x|/\varepsilon)
(1-\beta(t)) u 
\\+ \varepsilon^{-2} \beta''(|x|/\varepsilon)(1-\beta(t)) u + 2 \beta(|x|/\varepsilon)\beta'(t) \partial_t u
+ \beta(|x|/\varepsilon)\beta''(t) u.
\end{multline*}
Then \eqref{sharp2} gives that 
\[  \|\la D_\omega\ra^{-\frac{m-1}{2(m+1)}} \partial
    (1-\beta(t))\beta(|x|) u\|_{LE} \lesssim_\varepsilon \|\partial_x u\|_{LE} +
    \|\la x\ra^{-1} u\|_{LE}  + \sup_{1/2\le t\le 1} E[u](t)^{1/2}
\]
and subsequent applications of \eqref{energy} and \eqref{main_lossy}
complete the proof of Theorem \ref{thm_sharp}.

\subsection{WKB Analysis}
It now suffices to establish Proposition \ref{sharp_trapping_prop}.  The
method that we shall employ is inspired by \cite{MMTT_Str_BH}.  We
shall develop energy functionals that are based in WKB analysis in
order to achieve \eqref{sharp}.

We begin with $P v = g$.  Upon taking the Fourier transform in $t$ and
expanding into spherical harmonics, it suffices to examine
\begin{equation}\label{equation_tau_lambda} \partial_x^2\phi_{\lambda,\tau} +
V_{\lambda,\tau}(x)\phi_{\lambda,\tau} = g_{\lambda, \tau}
\quad\text{where } \quad
V_{\lambda,\tau} = \tau^2 - \frac{\lambda^2}{a(x)^2}.
\end{equation}
Here $\tau$ is the Fourier dual variable to $t$, and $\lambda^2$ runs
over the eigenvalues of $-\angdelta_{\S^2}$.
And to prove \eqref{sharp}, if $g_{\lambda,\tau} = g_{1,\lambda,\tau}
+ g_{2,\lambda,\tau}$ with $g_{1,\lambda,\tau}\in L^2_x$ and
$g_{2,\lambda,\tau}\in (|x|/\la x\ra)^{\frac{m-1+\delta}{2}}L^2_x$, it will suffice to show
\begin{equation}
  \label{sharp_tau_lambda}
  \la \lambda\ra^{-\frac{m-1}{2(m+1)}}
  (|\tau|+\lambda)\|\phi_{\lambda,\tau}\|_{L^2_x} + \|\partial_x \phi_{\lambda,\tau}\|_{L^2_x}
  \lesssim \la \lambda\ra^{\frac{m-1}{2(m+1)}} \|g_{1,\lambda,\tau}\|_{L^2_x}
  + \Bigl\|\Bigl(\frac{\la x\ra}{|x|}\Bigr)^{\frac{m-1+\delta}{2}} g_{2,\lambda,\tau}\Bigr\|_{L^2_x}.
\end{equation}
For ease of notation, we shall drop the subscripts in the sequel.
The analysis will be broken into four cases depending on the
relationship between $\tau$ and $\lambda$.

In the first three cases, we will prove the stronger estimate
\begin{equation}
  \label{wtsI-III}
  \|\partial_x \phi\|_{L^2} + (|\tau|+\lambda) \|\phi\|_{L^2} \lesssim \|g\|_{L^2}.
\end{equation}

\noindent{{\bf Case I:} ($\lambda, \tau \lesssim 1$)  Define the
  positive definite energy functional $E[\phi](x) = (\partial_x \phi)^2 + \phi^2$.
Then, since $|V_{\lambda,\tau}|\lesssim 1$ here,
\[\partial_x E[\phi](x) = 2\partial_x \phi (\partial_x^2\phi + \phi) \lesssim
g^2 + E[\phi](x).\]
By Gronwall's inequality and the fact that $\phi$ is compactly supported, it follows that
\[(\partial_x \phi)^2 + \phi^2 \lesssim \|g\|^2_{L^2},\]
from which the desired result is immediate upon integrating.

\noindent{\bf Case II:} ($\lambda \ll \tau$)  Here we similarly define
$E[\phi](x) = (\partial_x \phi)^2 + V_{\lambda,\tau}(x)\phi^2$.  Then,
\[\partial_x E[\phi]\lesssim |g\partial_x \phi| + \phi^2 |\partial_x
V_{\lambda,\tau}(x)|.\]
Since $|\partial_x V_{\lambda, \tau}(x)|\lesssim V_{\lambda,\tau}(x)$,
this gives
\[\partial_x E[\phi](x) \lesssim E[\phi](x) + g^2.\]
The desired estimate then follows, as above, from Gronwall's inequality and
integrating both sides of what results.

\noindent{\bf Case III:} ($\tau\ll \lambda$)  Here, due to the
different sign of $V_{\lambda,\tau}$ in this regime, we instead
consider the boundary value problem
\[
\begin{cases}
  \partial_x^2 \phi + V_{\lambda,\tau} \phi = g,\\
\phi(-1)=\phi(1)=0.
\end{cases}
\]
Multiplying both sides of the equation by $-\phi$ and integrating, we
find
\[\int_{-1}^{1} (\partial_x \phi)^2\, dx - \int_{-1}^{1}
V_{\lambda,\tau} \phi^2\,dx = -\int_{-1}^{1} g\phi\,dx.\]
In this case, $V_{\lambda,\tau} \approx -\lambda^2$ on the support of $\phi$, and the desired
result follows from the Schwarz inequality.

\noindent{\bf Case IV:} ($\tau \approx \lambda \gg 1$) 
Here we write 
\[V(x) = \lambda^2 \Bigl(1-\frac{1}{(x^{2m}+1)^{1/m}} +
  \varepsilon\Bigr) = \lambda^2 (b(x)+\varepsilon),\]
where $|\varepsilon|\lesssim 1$.  And we shall further subdivide this
analysis based on the relationship between $\lambda$ and
$\varepsilon$: $\varepsilon\ge -C\lambda^{-2m/(1+m)}$ and
 $\varepsilon\le -C\lambda^{-2m/(1+m)}$ where $C>0$ is a
sufficiently large constant.

{\em Case A:} ($\varepsilon\ge -C\lambda^{-2m/(1+m)}$)  
Here we shall show 
\begin{multline}
  \label{caseIVab}
  \lambda \|(\lambda^{-2m/(1+m)} +
  |b(x)+\varepsilon|)^{1/4}\phi\|_{L^\infty_x} +
  \|(\lambda^{-2m/(1+m)}+|b(x)+\varepsilon|)^{-1/4} \partial_x\phi\|_{L^\infty_x} \\\lesssim
  \|(\lambda^{-2m/(1+m)} + |b(x)+\varepsilon|)^{-1/4}g\|_{L^1_x}.
\end{multline}
And due to the following lemma, this will imply
\eqref{sharp_tau_lambda} in this case.

\begin{lem}
 For $m>1$ and $\varepsilon > -C \lambda^{-2m/(1+m)}$, where $C>0$, we have
\begin{equation}\label{bint}\|(\lambda^{-2m/(1+m)} + |b(x)+\varepsilon|)^{-1/4}
\|_{L^2(\{|x|\le 1\})} \lesssim \lambda^{(m-1)/2(m+1)}\end{equation}
and for any $\delta>0$
\[\|(\lambda^{-2m/(1+m)} + |b(x)+\varepsilon|)^{-1/4} |x|^{\frac{m-1+\delta}{2}}
\|_{L^2(\{|x|\le 1\})} =\O(1).\]
\end{lem}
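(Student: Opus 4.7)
The plan is to reduce both bounds to one-dimensional integrals that can be evaluated after a single rescaling. The key observation is that the Taylor expansion $b(x)=x^{2m}/m+O(x^{4m})$, together with strict monotonicity of $b$ on $[0,1]$, yields $b(x)\asymp x^{2m}$ uniformly on $\{|x|\le 1\}$, so in all the estimates below $b$ may be replaced by $x^{2m}$ up to a multiplicative constant. Writing $\mu:=\lambda^{-2m/(m+1)}$, the identity $\mu^{(1-m)/(2m)}=\lambda^{(m-1)/(m+1)}$ is precisely the square of the right-hand side of \eqref{bint}, so in each case it suffices to bound the square of the $L^2$ norm by $\mu^{(1-m)/(2m)}$ and $\O(1)$ respectively.

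For the first estimate I would split on the sign of $\varepsilon$. When $\varepsilon\ge 0$, one has $|b(x)+\varepsilon|\ge b(x)\gtrsim x^{2m}$, so the squared $L^2$ norm is dominated by $\int_{-1}^{1}(\mu+x^{2m})^{-1/2}\,dx$; the substitution $x=\mu^{1/(2m)}y$ converts this into $2\mu^{(1-m)/(2m)}\int_0^{\mu^{-1/(2m)}}(1+y^{2m})^{-1/2}\,dy$, and the hypothesis $m>1$ guarantees that the $y$-integral stays uniformly bounded as its upper limit tends to $+\infty$. When $-C\mu\le\varepsilon<0$ the zero of $b+\varepsilon$ sits at $|\varepsilon|^{1/(2m)}\lesssim\mu^{1/(2m)}$, so I would split at $|x|=K\mu^{1/(2m)}$ with $K$ chosen large enough depending on $C$: on the inner region the integrand is at most $\mu^{-1/2}$ on a set of measure $\lesssim\mu^{1/(2m)}$, reproducing the target bound $\mu^{(1-m)/(2m)}$; on the outer region $|b(x)+\varepsilon|\asymp b(x)\asymp x^{2m}$ and the previous computation applies verbatim.

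For the second bound the weight $|x|^{m-1+\delta}$ absorbs essentially all of the singularity, so almost no case analysis in $\varepsilon$ is required. Splitting again at $|x|=K\mu^{1/(2m)}$, the inner region uses the crude bound $(\mu+|b+\varepsilon|)^{-1/2}\le\mu^{-1/2}$ and contributes
\[\mu^{-1/2}\int_0^{K\mu^{1/(2m)}}x^{m-1+\delta}\,dx\;\asymp\;\mu^{\delta/(2m)}\;=\;\O(1),\]
while the outer region uses $\mu+|b(x)+\varepsilon|\gtrsim x^{2m}$ (where in the $\varepsilon<0$ sub-case one exploits $|\varepsilon|\lesssim\mu\ll x^{2m}$) and contributes $\int_{K\mu^{1/(2m)}}^{1}x^{\delta-1}\,dx\lesssim 1/\delta$, uniformly in $\mu$.

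The main technical nuisance is the $\varepsilon<0$ branch of the first estimate, where $b+\varepsilon$ has a genuine zero inside the integration region; the hypothesis $|\varepsilon|\lesssim\mu$ is precisely what is needed to confine this zero to the $\mu$-regularised flat zone $\{|x|\lesssim\mu^{1/(2m)}\}$, so that no additional singular contribution appears. The role of $m>1$ is purely analytic: it guarantees convergence of $\int^{\infty}(1+y^{2m})^{-1/2}\,dy$, with a logarithmic loss appearing exactly at the threshold $m=1$.
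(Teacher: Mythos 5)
Your proof is correct and follows essentially the same approach as the paper: split into an inner region near $x=0$ where the regularizing $\lambda^{-2m/(m+1)}$-term controls the integrand over a small-measure set, and an outer region where $|b(x)+\varepsilon|\gtrsim b(x)\asymp |x|^{2m}$ provides algebraic decay. The only cosmetic difference is that the paper splits directly on the size of $b(x)$ and uses $|b(x)+\varepsilon|\ge b(x)/2$ on the outer region to avoid any case analysis in the sign of $\varepsilon$, whereas you split on $|x|$ and treat $\varepsilon\ge0$ and $\varepsilon<0$ separately.
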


\begin{proof}
We shall begin by examining the case that $b(x)\le
2C\lambda^{-2m/(1+m)}$.  From this, we have $|x|\lesssim
\lambda^{-1/(1+m)}$ and
\[\int_{\{x\,:\, b(x)\le
2C\lambda^{-2m/(1+m)}\}}
\frac{1}{(\lambda^{-2m/(1+m)}+|b(x)+\varepsilon|)^{-1/2}}\, dx
\lesssim \int_{|x|\lesssim \lambda^{-1/(1+m)}} \lambda^{m/(1+m)} \,dx
\approx \lambda^{(m-1)/(m+1)},
\]
which gives the first estimate.  Similarly,
\[\int_{\{x\,:\, b(x)\le
2C\lambda^{-2m/(1+m)}\}}
\frac{|x|^{m-1+\delta}}{(\lambda^{-2m/(1+m)}+|b(x)+\varepsilon|)^{-1/2}}\, dx
\lesssim \int_{|x|\lesssim \lambda^{-1/(1+m)}} \lambda^{\frac{1-\delta}{m+1}} \,dx
=\O(1).\]
  
We next consider when $b(x) \ge 2C\lambda^{-2m/(1+m)}$.  In this
setting, we have that $|b(x)+\varepsilon|\ge |b(x)|/2$ and that
$|x|\gtrsim \lambda^{-1/(1+m)}$.  Then we have
\[\int_{\{x\,:\, b(x)\ge
2C\lambda^{-2m/(1+m)}\}}\frac{1}{(\lambda^{-2m/(1+m)}+|b(x)+\varepsilon|)^{-1/2}}\, dx
\lesssim \int^1_{c\lambda^{-1/(1+m)}} b(x)^{-1/2}\,dx.\]
Note that
\[b(x)^{-1/2} = \frac{(x^{2m}+1)^{1/2m}}{((x^{2m}+1)^{1/m}-1)^{1/2}}.\]
Moreover, we have
\begin{equation}\label{tay}(x^{2m}+1)^{1/m}-1 = \int_0^{|x|} \frac{2
  y^{2m-1}}{(y^{2m}+1)^{1-\frac{1}{m}}}\,dy \gtrsim \int_0^{|x|}
y^{2m-1}\,dy \approx |x|^{2m}.
\end{equation}
We also note for later purposes that the quantity on the left is $\O(|x|^{2m})$ on the
support of $\phi$.
In particular, on $|x|\le 1$, $b(x)^{-1/2} \lesssim |x|^{-m}$.  This gives that
\[\|(\lambda^{-2m/(1+m)} + |b(x)+\varepsilon|)^{-1/4}
\|^2_{L^2(\{x\,:\, b(x)\ge
2C\lambda^{-2m/(1+m)}\})} \lesssim \int_{c\lambda^{-1/(1+m)}}^1
|x|^{-m}\,dx \lesssim \lambda^{\frac{m-1}{m+1}}\]
as desired for the first estimate, and 
 \[\|(\lambda^{-2m/(1+m)} + |b(x)+\varepsilon|)^{-1/4} |x|^{\frac{m-1+\delta}{2}}
\|^2_{L^2(\{x\,:\, b(x)\ge
2C\lambda^{-2m/(1+m)}\})} \lesssim \int_{c\lambda^{-1/(1+m)}}^1
|x|^{-1+\delta}\,dx =\O(1).\]
\end{proof}

We now proceed to proving \eqref{caseIVab}.
\begin{proof}[Proof of \eqref{caseIVab}]
Here we consider two subcases.

{\em Case i:} ($b(x)\ge 2C \lambda^{-2m/(1+m)}$)  We define
\[E[\phi] = \lambda^2 (b(x)+\varepsilon)^{1/2}\phi^2 +
(b(x)+\varepsilon)^{-1/2}(\partial_x \phi)^2 +
\frac{1}{2}b'(x)(b(x)+\varepsilon)^{-3/2}\phi\partial_x \phi.\]

Since $\varepsilon\ge -C \lambda^{-2m/(1+m)}$, it follows that $1\ll
\lambda^2 (b(x)+\varepsilon)^{(m+1)/m}$.  Since \eqref{tay} shows that
$|b'(x)|^2\lesssim b(x)^{(2m-1)/m}$, on $|x|\le 1$, we have
\[\frac{1}{2}(b'(x))^2 (b(x)+\varepsilon)^{-5/2} \le \lambda^2
(b(x)+\varepsilon)^{1/2}\]
provided $C$ is sufficiently large.
Thus, by observing that
\[\frac{1}{2} b'(x) (b(x)+\varepsilon)^{-3/2} \phi \partial_x \phi \ge
-\frac{1}{4} (b(x)+\varepsilon)^{-1/2} (\partial_x \phi)^2 -
\frac{1}{4}(b'(x))^2 (b(x)+\varepsilon)^{-5/2}\phi^2,\]
it follows that $E[\phi]$ is positive.

We can compute
\begin{multline}
  \label{dEdx}
  \frac{dE}{dx} = \Bigl(\frac{1}{2}b''(x) (b(x)+\varepsilon)^{-3/2} -
  \frac{3}{4} (b'(x))^2(b(x)+\varepsilon)^{-5/2}\Bigr)\phi \partial_x
  \phi
\\+ \Bigl(2(b(x)+\varepsilon)^{-1/2} \partial_x \phi +
\frac{1}{2}b'(x)(b(x)+\varepsilon)^{-3/2} \phi\Bigr) g.
\end{multline}
From this and that $b'(x)(b(x)+\varepsilon)^{-3/2}\lambda^{-1} =
\O(1)$ on $|x|\le 1$, which is a consequence of \eqref{tay}, it follows that
\begin{equation}\label{coerenergy}  \frac{dE}{dx} \lesssim \lambda^{-1} b''(x) (b(x)+\varepsilon)^{-3/2}
  E + \lambda^{-1} (b'(x))^2 (b(x)+\varepsilon)^{-5/2} E
+ (b(x)+\varepsilon)^{-1/4} g E^{1/2}.
\end{equation}
Using \eqref{tay}, we observe that
\[\lambda^{-1} b''(x) (b(x)+\varepsilon)^{-3/2}
  + \lambda^{-1} (b'(x))^2 (b(x)+\varepsilon)^{-5/2} \lesssim
  (b(x)+\varepsilon)^{-1/2 - 1/m} \lambda^{-1} \lesssim
  (b(x)+\varepsilon)^{-1/2} \lambda^{-(m-1)/(m+1)}.\]
So,
\[\frac{d\, E^{1/2}}{dx} \lesssim \lambda^{-(m-1)/(m+1)}
(b(x)+\varepsilon)^{-1/2} E^{1/2} + (b(x)+\varepsilon)^{-1/4} g.\]
Upon integrating (if $x>0$, we apply the negative of the integral from
$1$ to $x$, and if $x<0$, we integrate from $-1$ to $x$) 
and applying Gronwall's inequality, since
\[\lambda^{-(m-1)/(m+1)} \int (b(x)+\varepsilon)^{-1/2}\,dx = \O(1)\]
by \eqref{bint},
it follows that
\[E^{1/2} \lesssim \int (b(x)+\varepsilon)^{-1/4} g\,dx.\]
This estimate implies \eqref{caseIVab}.

{\em Case ii:} ($b(x)\le 2C\lambda^{-2m/(1+m)}$) Defining
\[E[\phi] = \lambda^{(m+2)/(m+1)}\phi^2 + \lambda^{m/(m+1)}
(\partial_x\phi)^2,\]
it follows that
\[\frac{dE}{dx} = 2 \lambda^{(m+2)/(m+1)} \phi \partial_x \phi +
2\lambda^{m/(m+1)} \partial_x \phi (g-\lambda^2
(b+\varepsilon)\phi)\lesssim \lambda^{1/(m+1)} E +
\lambda^{m/2(m+1)}|g| E^{1/2}.\]
Dividing through by $E^{1/2}$, integrating, and applying Gronwall's inequality, where
\eqref{bint} shows that the integral in the resulting exponential is
$\O(1)$, we have
\[E^{1/2} \lesssim \|\lambda^{m/2(m+1)} g\|_{L^1},\]
which gives \eqref{caseIVab} in this range.
\end{proof}

{\em Case B:} ($\varepsilon \le -C\lambda^{-2m/(1+m)}$)
In this remaining case, we will show 
\begin{multline}
  \label{caseIVc}
  \lambda \|(\lambda^{-2/3} |\varepsilon|^{(2m-1)/3m} +
  |b(x)+\varepsilon|)^{1/4}\phi\|_{L^\infty_x} 
+\|(\lambda^{-2/3} |\varepsilon|^{(2m-1)/3m} +
  |b(x)+\varepsilon|)^{-1/4} \partial_x \phi\|_{L^\infty_x} 
\\\lesssim
  \|(\lambda^{-2/3} |\varepsilon|^{(2m-1)/3m}  + |b(x)+\varepsilon|)^{-1/4}g\|_{L^1_x},
\end{multline}
which, as above, will imply \eqref{sharp_tau_lambda} when combined
with the lemma:

\begin{lem}
 For $m>1$ and $\varepsilon \le -C \lambda^{-2m/(1+m)}$, where $C>0$, we have
\begin{equation}\label{bintc}\|((\lambda^{-2/3} |\varepsilon|^{(2m-1)/3m} + |b(x)+\varepsilon|)^{-1/4}
\|_{L^2(\{|x|\le 1\})} \lesssim \lambda^{(m-1)/2(m+1)}\end{equation}
and for any $\delta>0$
\begin{equation}\label{bintcii}\|((\lambda^{-2/3} |\varepsilon|^{(2m-1)/3m} + |b(x)+\varepsilon|)^{-1/4} |x|^{\frac{m-1+\delta}{2}}
\|_{L^2(\{|x|\le 1\})} =\O(1).
\end{equation}
\end{lem}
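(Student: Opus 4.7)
The plan is to partition $\{|x|\le 1\}$ around the classical turning point of $b(x)+\varepsilon$. Write $\mu:=|\varepsilon|$ and $w:=\lambda^{-2/3}\mu^{(2m-1)/(3m)}$. The Taylor expansion \eqref{tay} gives $b(x)\sim x^{2m}$ for $|x|\le 1$, so the turning point $x_0>0$ defined by $b(x_0)=\mu$ lies in $(0,1)$ and satisfies $x_0\sim \mu^{1/(2m)}$ precisely when $\mu\le b(1)$. If $\mu\ge 2b(1)$ then $|b+\varepsilon|\sim 1$ on the integration domain and both estimates follow trivially, so we assume henceforth that $\mu\le 2b(1)$, whence $x_0\lesssim 1$.

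Split $\{|x|\le 1\}$ into $R_1=\{|x|\le x_0/2\}$, $R_3=\{2x_0\le |x|\le 1\}$, and the Airy zone $R_2=\{x_0/2\le |x|\le 2x_0\}$. Monotonicity of $b$ on $(0,\infty)$ gives $|b+\varepsilon|\sim\mu$ on $R_1$ and $|b+\varepsilon|\ge b(x)/2\sim |x|^{2m}$ on $R_3$, so $w$ may be discarded in both regions. The $R_1$ contribution to $\|(w+|b+\varepsilon|)^{-1/4}\|_{L^2}^2$ is $\sim \mu^{-1/2}x_0\sim \mu^{(1-m)/(2m)}$, and the $R_3$ contribution is $\sim \int_{x_0}^{1}|x|^{-m}\,dx\sim \mu^{(1-m)/(2m)}$. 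Inserting $\mu\ge C\lambda^{-2m/(m+1)}$ bounds each by $\lambda^{(m-1)/(m+1)}$, matching \eqref{bintc}.

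The crux is $R_2$, which we subdivide into $R_2^{\mathrm{in}}=\{|b+\varepsilon|\le w\}\cap R_2$ and $R_2^{\mathrm{out}}=R_2\setminus R_2^{\mathrm{in}}$. Differentiating \eqref{tay} gives $|b'(x_0)|\sim x_0^{2m-1}\sim \mu^{(2m-1)/(2m)}$, so the width of $R_2^{\mathrm{in}}$ is $\sim w/|b'(x_0)|$; the hypothesis $\mu\ge C\lambda^{-2m/(m+1)}$ is exactly what is needed to ensure both $w\le \mu$ and $w/|b'(x_0)|\le x_0$, so that $R_2^{\mathrm{in}}\subset R_2$. The $R_2^{\mathrm{in}}$ contribution is then $w^{-1/2}\cdot w\mu^{-(2m-1)/(2m)}=\lambda^{-1/3}\mu^{-(2m-1)/(3m)}$, which under the same hypothesis collapses to exactly $\lambda^{(m-1)/(m+1)}$. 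On $R_2^{\mathrm{out}}$, the change of variables $u=b(x)-\mu$ reduces the integral to $\mu^{-(2m-1)/(2m)}\int_w^{\mu}u^{-1/2}\,du\sim \mu^{(1-m)/(2m)}$, controlled as in $R_1$.

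The weighted estimate \eqref{bintcii} follows from the same partition. The weight $|x|^{m-1+\delta}$ is $\lesssim \mu^{(m-1+\delta)/(2m)}$ on $R_1\cup R_2$ and converts the $R_3$ integrand into $|x|^{-1+\delta}$, which integrates to $\O(1/\delta)$. Routine bookkeeping yields an extra factor of $\mu^{\delta/(2m)}=\O(1)$ from $R_1$ and $R_2^{\mathrm{out}}$, and $\lambda^{-\delta/(m+1)}=\O(1)$ from $R_2^{\mathrm{in}}$, so the total is $\O(1)$. The principal obstacle is the algebraic bookkeeping in $R_2^{\mathrm{in}}$: the exponents in $w=\lambda^{-2/3}\mu^{(2m-1)/(3m)}$ must be calibrated exactly so that $w^{-1/2}\cdot(\text{width})\cdot \mu^{-(2m-1)/(2m)}$ balances against $\lambda^{(m-1)/(m+1)}$ precisely at the boundary $\mu\sim \lambda^{-2m/(m+1)}$ of Case B.
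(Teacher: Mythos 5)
Your proof is correct and follows essentially the same strategy as the paper: decompose $\{|x|\le 1\}$ around the turning point $x_0\sim|\varepsilon|^{1/2m}$, estimate the contribution of the Airy region $\{|b+\varepsilon|\lesssim w\}$ via its width $\sim w/|b'(x_0)|$, and compute the inner and outer regions directly using $b(x)\sim|x|^{2m}$, $b'(x)\sim|x|^{2m-1}$ from \eqref{tay}. The paper partitions by the sign and size of $b+\varepsilon$ relative to $\alpha=C\lambda^{-2/3}|\varepsilon|^{(2m-1)/3m}$ (three cases) rather than by $|x|$ relative to $x_0$, but this is a cosmetic difference: your $R_1\cup(R_2^{\mathrm{out}}\cap\{b+\varepsilon<0\})$, $R_2^{\mathrm{in}}$, and $R_3\cup(R_2^{\mathrm{out}}\cap\{b+\varepsilon>0\})$ correspond to the paper's Cases iii, ii, and i. Your observation about the exact exponent calibration at the threshold $|\varepsilon|\sim\lambda^{-2m/(m+1)}$ is the right way to understand why the estimate is tight there.
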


\begin{proof}
Here, as we will below, we consider three separate cases.  For
convenience, we will abbreviate $\alpha = C\lambda^{-2/3}|\varepsilon|^{\frac{2m-1}{3m}}$.

{\em Case i:} ($b+\varepsilon \ge C \lambda^{-2/3}|\varepsilon|^{(2m-1)/3m}$)
Using that $\varepsilon<0$ here, we have $b'(x)\gtrsim
(b(x))^{\frac{2m-1}{2m}} \gtrsim
(b(x)+\varepsilon)^{\frac{2m-1}{2m}}$.  Thus,
\[
\int_{b+\varepsilon \ge \alpha}
\frac{dx}{(b(x)+\varepsilon)^{1/2}}
 \lesssim \int_{\substack{b+\varepsilon \ge  \alpha\\x>0}}
 \frac{b'(x)}{(b(x)+\varepsilon)^{\frac{1}{2}+\frac{2m-1}{2m}}}\,dx
\lesssim (\alpha)^{-\frac{m-1}{2m}}
\lesssim \lambda^{(m-1)/(m+1)},
\]
which establishes \eqref{bintc}.  For \eqref{bintcii}, we similarly
argue
\[\int_{b+\varepsilon \ge \alpha}
\frac{|x|^{m-1+\delta}}{(b(x)+\varepsilon)^{1/2}}\,dx
\lesssim \int_{\substack{b+\varepsilon \ge \alpha\\x>0}}
\frac{(b')^{\frac{m-1+\delta}{2m-1}}}{(b(x)+\varepsilon)^{1/2}}\,dx 
\lesssim \int_{\substack{b+\varepsilon\ge \alpha\\x>0}}
\frac{b'(x)}{(b(x)+\varepsilon)^{1-\frac{\delta}{2m}}}\,dx
\lesssim |\varepsilon|^{\delta/2m}.
\]

{\em Case ii:} ($|b+\varepsilon| \le C
\lambda^{-2/3}|\varepsilon|^{(2m-1)/3m}$)
We note that
  $|b+\varepsilon|\le \alpha$ if and only if
  \[\Bigl[\frac{1}{[1-(|\varepsilon|-\alpha)]^m}-1\Bigr]^{1/2m}\le
  |x|\le \Bigl[\frac{1}{[1-(|\varepsilon|+\alpha)]^m}-1\Bigr]^{1/2m}.\]
The length of this interval satisfies:
\begin{equation}\label{caseii}\begin{split}
\Bigl[\frac{1}{[1-(|\varepsilon|+\alpha)]^m}-1\Bigr]^{1/2m}-\Bigl[\frac{1}{[1-(|\varepsilon|-\alpha)]^m}
  -1\Bigr]^{1/2m}
&=
\frac{1}{2m}\int^{\frac{1}{[1-(|\varepsilon|+\alpha)]^m}-1}_{\frac{1}{[1-(|\varepsilon|-\alpha)]^m}-1}
x^{-1+\frac{1}{2m}}\,dx\\
&\lesssim |\varepsilon|^{-1+\frac{1}{2m}}
  \Bigl(\frac{1}{[1-(|\varepsilon|+\alpha)]^m} -
  \frac{1}{[1-(|\varepsilon|-\alpha)]^m}\Bigr)\\ &\lesssim
  |\varepsilon|^{-1+\frac{1}{2m}} \alpha \approx \lambda^{-\frac{2}{3}}|\varepsilon|^{-\frac{2m-1}{6m}}.
\end{split}
\end{equation}
Then
\[\int_{|b+\varepsilon|\le \alpha} \lambda^{1/3}
|\varepsilon|^{-(2m-1)/6m}\,dx
\lesssim \lambda^{-1/3} |\varepsilon|^{-(2m-1)/3m} \lesssim \lambda^{(m-1)/(m+1)},
\]
which establishes \eqref{bintc}.

Similarly, for \eqref{bintcii}, we have
\begin{align*}\int_{|b+\varepsilon|\le \alpha} \lambda^{1/3}
&|\varepsilon|^{-(2m-1)/6m} |x|^{m-1+\delta}\,dx \\
&\lesssim \lambda^{1/3}|\varepsilon|^{-(2m-1)/6m} \Bigl\{\Bigl[\frac{1}{[1-(|\varepsilon|+\alpha)]^m}-1\Bigr]^{(m+\delta)/2m}-\Bigl[\frac{1}{[1-(|\varepsilon|-\alpha)]^m}
  -1\Bigr]^{(m+\delta)/2m}\Bigr\}\\
&\lesssim \lambda^{1/3} |\varepsilon|^{-(5m-1)/6m}
  |\varepsilon|^{\delta/2m} \alpha\\ &\approx \lambda^{-1/3}
  |\varepsilon|^{-(m+1)/6m} |\varepsilon|^{\delta/2m} \lesssim |\varepsilon|^{\delta/2m}.
\end{align*}

{\em Case iii:} ($b+\varepsilon \le - C\lambda^{-2/3} |\varepsilon|^{(2m-1)/3m}$)
In this region, we have $|b+\varepsilon|\ge C\lambda^{-2/3}
|\varepsilon|^{\frac{2m-1}{3m}}$, $|\varepsilon| >
C\lambda^{-2m/1+m}$, and $|x|\lesssim |\varepsilon|^{1/2m}$.  Thus, since
$m\ge 2$,
\[  \int_{|b(x)+\varepsilon|\ge
  C\lambda^{-2/3}|\varepsilon|^{\frac{2m-1}{3m}}}
  \frac{dx}{|b(x)+\varepsilon|^{1/2}} \lesssim \lambda^{1/3}
  |\varepsilon|^{-\frac{2m-1}{6m}}|\varepsilon|^{1/2m}
\lesssim \lambda^{\frac{m-1}{m+1}},
\]
which gives \eqref{bintc}.  For \eqref{bintcii}, by factoring the
denominator, we observe
\begin{align*}
  \int_{b(x)+\varepsilon\le-\alpha}
  \frac{|x|^{m-1+\delta}}{|b(x)+\varepsilon|^{1/2}} \, dx&\lesssim
                                                      \frac{1}{|\varepsilon|^{1/4}}
                                                      \int_{b(x)+\varepsilon\le-\alpha}
                                                      \frac{|x|^{m-1+\delta}}{(|\varepsilon|^{1/2}-(b(x))^{1/2})^{1/2}}\,
                                                           dx\\
&\lesssim \frac{1}{|\varepsilon|^{1/4}}
  \int_{b(x)+\varepsilon\le-\alpha} \frac{b'(x)}{b(x)^{1/2}
  (|\varepsilon|^{1/2}-(b(x))^{1/2})^{1/2}}\, dx = \O(1).
\end{align*}
\end{proof}

\begin{proof}[Proof of \eqref{caseIVc}]
We will split the proof into three cases.  For the first case, we
argue in a fashion similar to that of Case i of \eqref{caseIVab}.
And in the second case, the argument mirrors that of Case ii of
\eqref{caseIVab}.  In the remaining case, the sign of the potential
term $b(x)+\varepsilon$ changes, and we use arguments common to
elliptic equations.

{\em Case i: } ($b+\varepsilon \ge 2C \lambda^{-2/3} |\varepsilon|^{(2m-1)/3m} $)  
Here, as in Case {\em A(i)}, we have $|b'(x)|^2\lesssim
b(x)^{(2m-1)/m}$.  Then we see that
\[|b'(x)|^2\lesssim |b(x)+\varepsilon|^{(2m-1)/m} +
|\varepsilon|^{(2m-1)/m}
\le |b(x)+\varepsilon|^3 \Bigl(|b(x)+\varepsilon|^{-(m+1)/m} +
|\varepsilon|^{(2m-1)/m} |b(x)+\varepsilon|^{-3}\Bigr).\]
Thus, in this case, we have
\begin{equation}\label{coercive}|b'(x)|^2\lesssim \lambda^2 |b(x)+\varepsilon|^3.\end{equation}
If we define,
\[E[\phi] = \lambda^2 (b(x)+\varepsilon)^{1/2} \phi^2 +
(b(x)+\varepsilon)^{-1/2}(\partial_x\phi)^2 + \frac{1}{2}b'(x)
(b(x)+\varepsilon)^{-3/2}\phi \partial_x \phi,\]
it follows from \eqref{coercive} that $E[\phi]$ is positive provided $C$ is
sufficiently large.

We then compute
\[
\frac{dE}{dx} \lesssim \lambda^{-1} b''(x) (b(x)+\varepsilon)^{-3/2}
  E + \lambda^{-1} (b'(x))^2 (b(x)+\varepsilon)^{-5/2} E
+ (b(x)+\varepsilon)^{-1/4} g E^{1/2},
\]
and an application of Gronwall's inequality will yield \eqref{caseIVc}
provided that we can show
\begin{equation}
  \label{caseIVci_goal}
  \int_{b(x)+\varepsilon \ge 2C\lambda^{-2/3}|\varepsilon|^{(2m-1)/3m}}
\Bigl(\lambda^{-1} b''(x) (b(x)+\varepsilon)^{-3/2}
   + \lambda^{-1} (b'(x))^2 (b(x)+\varepsilon)^{-5/2} \Bigr)\,dx = \O(1).
\end{equation}
By symmetry, it suffices to demonstrate this where $x>0$.

We first note that
\[(b'(x))^2 (b(x)+\varepsilon)^{-5/2} 
\lesssim b'(x) (b(x)+\varepsilon)^{-(3m+1)/2m} +
|\varepsilon|^{\frac{2m-1}{2m}}b'(x) (b(x)+\varepsilon)^{-5/2}.\]
Thus using the change of variables $u=b(x) + \varepsilon$, we can
evaluate
\begin{multline*}\lambda^{-1}\int_{b(x)+\varepsilon \ge
  2C\lambda^{-2/3}|\varepsilon|^{(2m-1)/3m}} (b'(x))^2
(b(x)+\varepsilon)^{-5/2} \,dx 
\\\lesssim \lambda^{-1}
(\lambda^{-2/3}|\varepsilon|^{\frac{2m-1}{3m}})^{-\frac{m+1}{2m}} +
\lambda^{-1} |\varepsilon|^{\frac{2m-1}{2m}} (\lambda^{-2/3}
|\varepsilon|^{\frac{2m-1}{3m}})^{-3/2} = \O(1).
\end{multline*}

For the other term in \eqref{caseIVci_goal}, we argue similarly.
Here, 
\[b''(x) \lesssim b'(x) |x|^{-1} \lesssim b'(x)
(b(x)+\varepsilon)^{-1/2m}\]
where the last step follows as $\varepsilon<0$ in this case.  Hence
\[\lambda^{-1}b''(x) (b(x)+\varepsilon)^{-3/2} \lesssim \lambda^{-1}
b'(x) (b(x)+\varepsilon)^{-(3m+1)/2m},\]
and the integral of the right side was evaluated in the previous
step, which completes the proof of \eqref{caseIVci_goal}.

{\em Case ii: } ($|b+\varepsilon| \le C \lambda^{-2/3} |\varepsilon|^{(2m-1)/3m} $).
We define
\[E[\phi] = \lambda^2 \lambda^{-\frac{1}{3}} |\varepsilon|^{\frac{2m-1}{6m}}
\phi^2 + \lambda^{\frac{1}{3}}|\varepsilon|^{-\frac{2m-1}{6m}} (\partial_x\phi)^2\]
and compute
\[\frac{dE}{dx} = 2\lambda^2 \Bigl(\lambda^{-\frac{1}{3}}
|\varepsilon|^{\frac{2m-1}{6m}}  -
  \lambda^{\frac{1}{3}}|\varepsilon|^{-\frac{2m-1}{6m}} 
    (b(x)+\varepsilon)\Bigr)\phi\partial_x \phi +2
    \lambda^{\frac{1}{3}}|\varepsilon|^{-\frac{2m-1}{6m}} \partial_x
      \phi g.\]
Thus,
\begin{equation}\label{dhalfe}\frac{d}{dx}E^{1/2} \lesssim
\lambda^{\frac{2}{3}}|\varepsilon|^{\frac{2m-1}{6m}} E^{1/2} +
\lambda^{1/6}|\varepsilon|^{-\frac{2m-1}{12m}} g.\end{equation}
An application of Gronwall's inequality to \eqref{dhalfe} yields
\eqref{caseIVc} in the given range when combined with \eqref{caseii}.

{\em Case iii: } ($b+\varepsilon \le -C \lambda^{-2/3} |\varepsilon|^{(2m-1)/3m}$)
We define $x_{\pm}$ so that $[x_-,x_+]=\{x\,:\, b+\varepsilon \le
-\alpha\}$.  We then multiply the equation \eqref{equation_tau_lambda} by
$-\lambda \phi$ and integrate over $[x_-,x_+]$ to obtain
\[\int_{x_-}^{x_+} \lambda (\partial_x\phi)^2 + \lambda^3
|b(x)+\varepsilon| \phi^2\,dx = \int_{x_-}^{x_+} -\lambda \phi g\,dx +
\lambda \phi \partial_x \phi\Bigl|_{x_-}^{x_+}.\]
But by the previous case, we have
\begin{equation}\label{bdytermsprevcase}\Bigl| \lambda \phi \partial_x \phi \Bigl|_{x_-}^{x_+}\Bigr|
\lesssim \|(\lambda^{-2/3} |\varepsilon|^{(2m-1)/3m}  +
|b(x)+\varepsilon|)^{-1/4}g\|^2_{L^1_x}.
\end{equation}
Thus, we have shown
\begin{multline}\label{intpiece}\int_{x_-}^{x_+} \lambda (\partial_x\phi)^2 + \lambda^3
|b(x)+\varepsilon| \phi^2\,dx \\\lesssim 
\lambda\|(\lambda^{-2/3} |\varepsilon|^{(2m-1)/3m}  +
|b(x)+\varepsilon|)^{1/4} \phi\|_{L^\infty}
\|(\lambda^{-2/3} |\varepsilon|^{(2m-1)/3m}  +
|b(x)+\varepsilon|)^{-1/4}g\|_{L^1_x}\\+
\|(\lambda^{-2/3} |\varepsilon|^{(2m-1)/3m}  +
|b(x)+\varepsilon|)^{-1/4}g\|^2_{L^1_x}.
\end{multline}
By the Fundamental Theorem of Calculus and \eqref{bdytermsprevcase},
we have
\begin{multline*}
  \lambda^2 (\lambda^{-2/3} |\varepsilon|^{(2m-1)/3m}  +
|b(x)+\varepsilon|)^{1/2} \phi^2(x) \lesssim
\|(\lambda^{-2/3} |\varepsilon|^{(2m-1)/3m}  +
|b(x)+\varepsilon|)^{-1/4}g\|^2_{L^1_x}
\\+ \int_{x_-}^{x_+} \lambda^2 (\lambda^{-2/3} |\varepsilon|^{(2m-1)/3m}  +
|b(x)+\varepsilon|)^{-1/2} |b'(x)| \phi^2 \\+ 2\lambda^2  (\lambda^{-2/3} |\varepsilon|^{(2m-1)/3m}  +
|b(x)+\varepsilon|)^{1/2} |\phi \partial_x\phi|\,dx.
\end{multline*}
Using \eqref{coercive}, it follows that this is
\[
  \lesssim \|(\lambda^{-2/3} |\varepsilon|^{(2m-1)/3m}  +
|b(x)+\varepsilon|)^{-1/4}g\|^2_{L^1_x}
+ \int_{x_-}^{x_+} \lambda^3 |b(x)+\varepsilon|\phi^2 + \lambda
(\partial_x \phi)^2\,dx.
\]
And thus, combined with \eqref{intpiece}, this completes the proof of
\eqref{caseIVc}.
\end{proof}

\section{Sharpness via quasimodes - Proof of Theorem \ref{thm_quasi}}

The notation here is much simpler if we work with complex valued
solutions to the wave equation.  Upon constructing a complex valued
solution that saturates the estimate, it immediately follows that
either the real part or the imaginary part of the constructed solution
is a real valued solution that also saturates the estimate.  Moreover,
we shall work within the $\phi=\pi/2$ plane, which is a restriction
that is preserved by our equation.

The proof is based on the following quasimode that was constructed in
\cite{CW}.
\begin{lem}[\cite{CW}]
  Given $\lambda\gg 1$ and $\alpha, \beta \in \R$, there exists a function $\tilde{u}\in C^3(\R)$ so that
  $\supp \tilde{u}\subseteq [-2 \lambda^{-\frac{1}{m+1}}, 2\lambda^{-\frac{1}{m+1}}]$,
  $\|\tilde{u}\|^2_{L^2}\approx \lambda^{\frac{m-1}{m+1}}$, and
\[-\lambda^{-2} \partial_x^2 \tilde{u} = \Bigl(E+\frac{x^{2m}}{m}\Bigr)\tilde{u}
- R,\]
where $E=(\alpha+i\beta) \lambda^{-\frac{2m}{m+1}}$ and $\|R\|_{L^2}
\le C \lambda^{-\frac{2m}{m+1}} \|\tilde{u}\|_{L^2}$. 
\end{lem}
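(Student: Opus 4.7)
My plan is to construct $\tilde u$ as a rescaled, compactly supported, approximate eigenfunction of the semiclassical operator $-\lambda^{-2}\partial_x^2 - x^{2m}/m$, localized near the critical point $x=0$ on the natural balancing length scale $\lambda^{-1/(m+1)}$. The relevant change of variables is $y = \lambda^{1/(m+1)} x$, which sends the support $[-2\lambda^{-1/(m+1)},2\lambda^{-1/(m+1)}]$ to $[-2,2]$. Writing $\tilde u(x) = A\,v(\lambda^{1/(m+1)}x)$ and multiplying the given equation by $\lambda^{2m/(m+1)}$, the problem becomes the order-one ODE
\begin{equation*}
-\partial_y^2 v - \frac{y^{2m}}{m}v = (\alpha+i\beta)v - \widehat R, \qquad \widehat R := A^{-1}\lambda^{2m/(m+1)} R(\lambda^{-1/(m+1)}y),
\end{equation*}
on $|y|\le 2$. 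The normalization constant $A$ is chosen so that $\|\tilde u\|_{L^2}^2 \approx \lambda^{(m-1)/(m+1)}$, which after the Jacobian of the change of variables forces $A \approx \lambda^{m/(2(m+1))}$ provided $\|v\|_{L^2(|y|\le 1)} \approx 1$.

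On the compact interval $|y|\le 2$, the rescaled ODE has smooth coefficients and an arbitrary complex constant ``energy'' $\alpha+i\beta$. Standard ODE theory supplies a two-parameter family of analytic exact solutions; I would single out a particular solution $v$ using an amplitude-phase (Pr\"ufer) ansatz $v(y) = r(y)e^{i\theta(y)}$, where the amplitude and phase satisfy the usual first-order system driven by $\alpha+i\beta+y^{2m}/m$. The initial conditions for the Pr\"ufer system would be chosen so that $|v|\approx 1$ on $[-1,1]$ (ensuring the desired $L^2$ lower bound) while keeping $v$ and $v'$ uniformly bounded on $[-2,2]$ with bounds depending only on $(\alpha,\beta)$.

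To make $\tilde u$ compactly supported on all of $\R$ I multiply by a smooth cutoff $\chi(y)$ equal to $1$ on $|y|\le 1$ and vanishing for $|y|\ge 2$. Since $v$ exactly solves the rescaled ODE on $|y|\le 2$, the product $\chi v$ satisfies the same ODE with error
\begin{equation*}
\widehat R = -2\chi'(y)v'(y) - \chi''(y)v(y),
\end{equation*}
which is supported in $1\le|y|\le 2$ and satisfies $\|\widehat R\|_{L^2}\lesssim \|v\|_{C^1([-2,2])} = O(1)$. Undoing the rescaling and tracking the factors $A$ and $\lambda^{2m/(m+1)}$ converts this into $\|R\|_{L^2}\lesssim \lambda^{-2m/(m+1)}\|\tilde u\|_{L^2}$, which is exactly the claim. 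Regularity $\tilde u\in C^3$ is immediate since $v$ is analytic and $\chi$ is smooth.

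\emph{Main obstacle.} The delicate step is the Pr\"ufer/ODE analysis of the middle paragraph: for arbitrary $(\alpha,\beta)\in\R^2$ one must ensure that the chosen solution is simultaneously bounded below in $L^2(|y|\le 1)$ and bounded above in $C^1(|y|\le 2)$, uniformly in $\lambda$. The difficulty is that the character of $v$ changes with $(\alpha,\beta)$: for $\alpha+i\beta$ with positive real part the equation is oscillatory everywhere, while for very negative $\alpha$ turning points appear in $|y|\le 2$ and $v$ can exhibit mixed exponential/oscillatory behavior. Since the lemma allows the constants in the error bound to depend on $(\alpha,\beta)$ through a single $C$, it suffices to treat $(\alpha,\beta)$ as fixed and use standard Sturm--Liouville/Pr\"ufer estimates, but this case analysis is where the real work lies.
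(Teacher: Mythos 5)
Your proposal takes the natural route: rescale by $y = \lambda^{1/(m+1)}x$ to reduce to an order-one ODE, solve that ODE exactly on a fixed interval, cut off with $\chi$, and track the Jacobian factors. The scaling bookkeeping is correct (the normalization $A\approx\lambda^{m/(2(m+1))}$, the conversion $\|R\|_{L^2}/\|\tilde u\|_{L^2} = \lambda^{-2m/(m+1)}\|\widehat R\|_{L^2}/\|v\|_{L^2}$), the cutoff error $\widehat R = \chi''v + 2\chi'v'$ is supported in $1\le|y|\le 2$, and this is indeed the construction the cited reference \cite{CW} carries out. So the strategy is sound.

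The one place I would push back is your ``Main obstacle'' paragraph: it is not an obstacle. Once $(\alpha,\beta)$ are fixed, the rescaled ODE $-v'' = (\alpha+i\beta+y^{2m}/m)v$ has bounded smooth coefficients on the fixed compact interval $[-2,2]$, so there is no case analysis to do. Simply take the Cauchy problem solution with $v(0)=1$, $v'(0)=0$; Gronwall's inequality gives $\|v\|_{C^1([-2,2])}\le C(\alpha,\beta,m)$ uniformly (regardless of the sign of $\alpha$ or the presence of turning points), and since $v(0)=1$ with $v'$ bounded, $|v|\ge 1/2$ on a fixed neighborhood of $0$, yielding $\|v\|_{L^2(|y|\le 1)}\gtrsim 1$. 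The Pr\"ufer/Sturm--Liouville machinery is both unnecessary and somewhat out of place here, since that decomposition is tailored to real second-order self-adjoint problems while your ODE is genuinely complex-valued. Replacing that paragraph with the Cauchy-data argument closes the only open step and makes the proof complete.
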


We shall let $\psi$ solve
\[
\begin{cases}
  \Box_\g \psi = 0,\\
\psi(0,x,\theta,\phi) =
e^{i\lambda\theta}a(x)^{-1} \tilde{u}(x),\quad \partial_t\psi(0,x,\theta,\phi) =
i\tau e^{i\lambda\theta}a(x)^{-1} \tilde{u}(x).
\end{cases}
\]
To show that $\psi$ saturates the estimate, we construct the
approximate solution $v = e^{i\tau t} e^{i\lambda \theta} a(x)^{-1}
\tilde{u}(x)$.  Here $\tau$ satisfies $\tau^2 = \lambda^2 (1+E)$ with
$\Im \tau < 0$.  We compute that
\[\Box_\g v = a(x)^{-1} e^{i\tau t} e^{i\lambda \theta}\Bigl[
\Bigl(\lambda^2 -\lambda^2 \frac{x^{2m}}{m} - \lambda^2 a(x)^{-2}
-\frac{a''(x)}{a(x)}\Bigr)\tilde{u} + \lambda^2 R\Bigr]\]
with $(v, \partial_tv)|_{t=0} = (\psi, \partial_t
\psi)|_{t=0}$.  On the support of $\tilde{u}$, we have $a(x)^{-2} = 1
-\frac{x^{2m}}{m} + \O(\lambda^{-\frac{4m}{m+1}})$ and $\lambda^{-2} a''(x)/a(x) =
\O(\lambda^{-\frac{4m}{m+1}})$.  Thus, $\Box_\g v =
\tilde{R}$ where $\tilde{R} = \lambda^2 a(x)^{-1} e^{i\tau
  t}e^{i\lambda\theta} (R + \O(\lambda^{-\frac{4m}{m+1}})\tilde{u})$,
and in particular
\begin{equation}\label{tildeR}\|\tilde{R}\|_{L^2} \lesssim e^{|\Im \tau| t}
\lambda^{\frac{2}{m+1}} \|\tilde{u}\|_{L^2}.
\end{equation}

For $T=\varepsilon \lambda^{\frac{m-1}{m+1}}$ where $\varepsilon>0$
will be chosen later but is independent of $\lambda$, we compute
\[\|\partial_\theta v\|^2_{L^2L^2} = \int_0^T
\|e^{i\tau t} \partial_\theta \psi(0,\cd)\|^2_{L^2}\,dt = \int_0^T e^{2|\Im \tau| t}
\|\partial_\theta \psi(0,\cd)\|^2_{L^2}\,dt = T\cdot B(2T)
\|\partial_\theta \psi(0,\cd)\|^2_{L^2}
\]
where $B(T) = \frac{e^{T|\Im\tau|} - 1}{T|\Im\tau|}$.  Hence
\begin{equation}
  \label{approx_soln}
  \|\partial_\theta v\|^2_{L^2 L^2} = \varepsilon
  B(2T) \lambda^{\frac{m-1}{m+1}} \|\partial_\theta\psi(0,\cd)\|^2_{L^2}.
\end{equation}

We now estimate the error $w(t,x,\theta) = \psi(t,x,\theta) -
v(t,x,\theta)$, which solves
\[\Box_\g w = \tilde{R}, \quad (w,\partial_t w)|_{t=0} = (0,0).\]
Thus, by \eqref{energy},
\[\|(a(x))^{-1}\partial_\theta w\|_{L^\infty L^2} \le \int_0^T \|\tilde{R}\|_{L^2}\,ds
\lesssim \int_0^T e^{|\Im \tau|t} \lambda^{-\frac{m-1}{m+1}}
\|\partial_\theta \psi(0,\cd)\|_{L^2}\,ds = T\cdot B(T)
\lambda^{-\frac{m-1}{m+1}} \|\partial_\theta\psi(0,\cd)\|_{L^2}.\]
And hence
\begin{equation}\label{error_quasimode}\|(a(x))^{-1}\partial_\theta
  w\|_{L^2 L^2} \lesssim B(T)
T^{3/2}\lambda^{-\frac{m-1}{m+1}}\|\partial_\theta\psi(0,\cd)\|_{L^2}
=\varepsilon^{3/2} B(T)
\lambda^{\frac{m-1}{2(m+1)}}\|\partial_\theta\psi(0,\cd)\|_{L^2}.
\end{equation}

Using \eqref{approx_soln}, \eqref{error_quasimode}, and that $v$ is
supported on $[-1/2,1/2]$ if $\lambda$ is sufficiently large, it follows 
\begin{align*}\|\beta(|x|) \partial_\theta \psi\|^2_{L^2L^2} &\ge
\frac{1}{2}\|\beta(|x|) \partial_\theta v\|^2_{L^2L^2} -
\|\beta(|x|) \partial_\theta w\|^2_{L^2L^2}\\ &\ge \frac{1}{2}\|\partial_\theta
v\|^2_{L^2L^2} - \|a(x)^{-1}\partial_\theta w\|^2_{L^2L^2} \ge
\varepsilon \Bigl(\frac{1}{2} B(2T) - C^2\varepsilon^2
(B(T))^2\Bigr)\lambda^{\frac{m-1}{m+1}} \|\partial_\theta \psi(0,\cd)\|^2_{L^2}. 
\end{align*}
If we choose $\varepsilon = \frac{1}{2C}$, we can compute
\[\frac{1}{2}B(2T) - C^2\varepsilon^2 (B(T))^2 = \frac{1}{2}
\sum_{k=2}^\infty \frac{(k-2)2^{k-2}+1}{(k-1)!} (|\Im \tau|
T)^{k-2}\ge \frac{1}{2},\]
which completes the proof.


\bibliography{deg_wave}


\end{document}